\theoremstyle{plain}
\newtheorem{thm}{Theorem}[section]
\newtheorem{claim}{Claim}[thm]
\newtheorem{lem}[thm]{Lemma}
\newtheorem{prop}[thm]{Proposition}
\newtheorem{cor}[thm]{Corollary}
\theoremstyle{definition}
\newtheorem{defn}[thm]{Definition}
\newtheorem{ques}[thm]{Question}
\theoremstyle{remark}
\numberwithin{equation}{section}
\newcommand{\BA}{\mathbf{BA}}
\newcommand{\DI}{\mathbf{DI}}
\newcommand{\SL}{\operatorname{SL}}
\newcommand{\Ad}{\operatorname{Ad}}
\newcommand{\Lie}{\operatorname{Lie}}
\newcommand{\dist}{\operatorname{dist}}
\newcommand\on[1]{\operatorname{#1}}
\newcommand\ca[1]{\mathcal{#1}}
\newcommand\hs{homogeneous space}
\newcommand{\nz}{\smallsetminus\{0\}}
\newcommand{\R}{{\mathbb{R}}}
\newcommand{\Co}{{\mathbb{C}}}
\newcommand{\Z}{{\mathbb{Z}}}
\newcommand{\N}{{\mathbb{N}}}
\newcommand\hd{Hausdorff dimension}
\newcommand{\vp}{{\bf p}}
\newcommand{\vq}{{\bf q}}
\newcommand{\fg}{{\mathfrak{g}}}
\newcommand{\fn}{{\mathfrak{n}}}
\newcommand{\fh}{{\mathfrak{h}}}
\newcommand{\ad}{{\operatorname{ad}}}
\newcommand{\tr}{{\text{tr}}}
\newcommand{\ignore}[1]{{}}
\newcommand{\ggm}{G/\Gamma}
\newcommand {\new}[1]   {\textcolor{brown}{#1}}
\newcommand {\comm}[1]   {\textcolor{red}{#1}}
\newcommand\eq[2]{
\begin{equation}
\label{eq:#1}
{#2}
\end{equation}
}
\newcommand{\equ}[1]{\eqref{eq:#1}}
\newcounter{mycomment}
\title[Constructing bounded orbits]{Constructing bounded orbits  of special types \\ on homogeneous spaces}
\author{Manfred Einsiedler}
\author{Dmitry Kleinbock}
\author{Anurag Rao}
\address{ETH, Z\"urich} 
\email{manfred.einsiedler@math.ethz.ch}
\address{Department of Mathematics, Brandeis University, Waltham MA}
\email{kleinboc@brandeis.edu}
\address{SIMIS, Fudan University China} \email{arao@simis.cn}
\subjclass{37A17; 37A25, 37D40, 11J70}
\keywords{homogeneous flows, orbit closures, bounded orbits, mixing}
\date{January 2026}
\thanks{The second-named author was supported by NSF grant DMS-2155111.}
\begin{document}

\begin{abstract}
    Let $X = G/\Gamma$ be a quotient of a real Lie group 
    by a non-uniform lattice. Consider a one-parameter subgroup $F$ of $G$ that is $\Ad$-diagonalizable over $\Co$ and whose action 
    on $X$ is mixing. In this dynamical system we study the set of points $x \in X$ with a precompact orbit, written as $E(F,\infty)$, which 
    {is known} to be a dense  subset of $X$ of full Hausdorff dimension. We prove that $E(F,\infty)$ is 
indecomposable in the following sense: given any
 $y \in E(F,\infty)$, 
the set of $x \in 
 E(F,\infty)$ for which $y \in \overline{F_+x}$, where $F_+$ denotes the positive ray in $F$, is uncountable and dense in $E(F,\infty)$.
    When the dimension of the neutral subgroup of $G$ with respect to $F$ is $1$ we demonstrate, for any $\varepsilon>0$, the existence of many points $x \in X$ whose  orbit closure $\overline{F_+x} \subset X$ is compact and has Hausdorff dimension at least  $\dim X - \varepsilon$.
\end{abstract}

\maketitle

\section{Introduction}\label{intro}

 Let $X$ be a   metric  space, and let $F$ be {either of the (semi) groups $\R$ or $\Z$ or $\R_{\ge 0}$ or $\Z_{\ge 0}$ represented as a family of} self-maps of $X$. We will write $F = \{g_t\}$. For $g\in F$ we will denote its action on $X$ by 
$(g,x) \mapsto g
x$.
 Now fix a subset $Y\subset  X$ and define the set of points of $X$ {\sl approaching} $Y$ by $F$ as
    \begin{equation*}
        A(F,Y) := \big\{ x \in X : \exists \text{ an unbounded sequence }(g_{t_k})_{k \in \N} \subset F \text{ with }
        {\lim_{k \to \infty} g_{t_k} 
        x \in Y}
 \big\}.
    \end{equation*}
    Its complement,  the set of points of $X$ {\sl escaping} {$Y$} by $F$ will be denoted by $E(F,{Y})$, namely
    \begin{equation*}
        E(F,{Y}) := \bigcap_{y \in Y}\big\{ x\in X : y
        \notin\overline{(F\smallsetminus F_0) x} 
         \text{ for some bounded }F_0\subset F\big\}.
    \end{equation*}
 If $X$ is 
not bounded, we make the convention that $x\in A(F,\infty)$ when the orbit $F
 x$ is unbounded.

 {Take $y\in Y$ and   consider $A(F,y):= A(F,\{y\})$ and $E(F,y):= E(F,\{y\})$.}  A natural question one could ask  is how large the sets $A(F,y)$   and   $E(F,y)$ can be. If the action admits an ergodic invariant probability measure, or has some hyperbolic behavior, some instances of this question can be  answered. Namely, under the assumption of existence of an ergodic measure $\mu$ of full support the set  $A(F,y)$ has full measure for any $y\in X\cup \{\infty\}$. Additionally,  for many hyperbolic systems one can prove  that the $\mu$-null sets $E(F,y)$  are \textsl{thick},  that is, their intersection with any nonempty open set has full Hausdorff dimension.  For results of this kind see e.g.\ \cite{Da3} for toral endomorphisms, \cite{U, Do} for Anosov flows, and {\cite{Da1, KM, Kl2, AGK}} for partially hyperbolic flows on \hs.
Furthermore, in many cases these sets can be proved to be winning in the sense of Schmidt games \cite{S};  see e.g.\  \cite{AGK, BFK,Da1,Da3,Ts1,Ts2,wu1,wu2}).
Consequently in these cases  it follows that both $A(F,y)$   and   $E(F,y)$ are thick.
Moreover, both full-measure and winning conditions are stable with respect to countable intersections; thus
for any choice of countably many semigroups $F_i$ and points $y_i \in X\cup \{\infty\}$  for which the above conclusions can be established, it holds that both $\bigcap_i A(F_i,y_i)$   and   $\bigcap_i E(F_i,y_i)$  are thick. 

\smallskip
This paper addresses the following question: what if one considers 
a mixed case, that is, investigate the intersection \eq{mixed}{A(F,y) \cap E(F,z),} where $y,z\in X\cup \{\infty\}$? Problems of this type are amenable neither to the full-measure argument, nor to the technique based on Schmidt games. Also it is clear that some conditions on $y,z$ should be imposed; for example if $y=z$ it is clear that the intersection  \equ{mixed} is empty. 

\smallskip
In what follows we restrict our attention to $F\cong \R$, 
also denoting $F_+ := \{g_t: t\ge 0\}$.
Assume that the $F$-action on $X$ is continuous. Then let us record {a natural} 
obstruction to  
the set  \eq{mixedplus}{A(F_+,y) \cap E(F_+,z)} being non-empty:  

\begin{lem}\label{lem: obstruction}
    If {$y \in X$  and $z \in X \cup \{\infty\}$} are such that $y \in A(F,z)$ (that is, $z$ can be approached by the trajectory of $y$ either in the positive or in the negative time direction), then 
    \begin{equation*}
        A(F_+,y) \subset A(F_+,z).
    \end{equation*}
\end{lem}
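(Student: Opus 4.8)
The plan is to prove the inclusion directly, by \emph{concatenating} two ``approaching'' sequences. Unwinding the definitions, the hypothesis $x\in A(F_+,y)$ provides times $t_k\to+\infty$ with $g_{t_k}x\to y$, while $y\in A(F,z)$ provides an unbounded sequence $(s_j)$ --- which, after passing to a subsequence, we may assume tends either to $+\infty$ or to $-\infty$ --- with $g_{s_j}y\to z$ (when $z=\infty$ this should be read, via the convention in the introduction, as $g_{s_j}y$ eventually leaving every compact subset of $X$). The idea is to first flow $x$ for a long positive time until it is very close to $y$, and then flow for the additional amount $s_j$, so that the result lands near $g_{s_j}y$ and hence near $z$; the only thing to watch is that the total elapsed time still runs off to $+\infty$.

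To implement this when $z\in X$, fix a metric $d$ on $X$ and build the witnessing sequence diagonally. For each $l\in\N$, first choose $j=j(l)$ with $d\big(g_{s_{j(l)}}y,\,z\big)<\tfrac1{2l}$. Since the map $x'\mapsto g_{s_{j(l)}}x'$ is continuous, the convergence $g_{t_k}x\to y$ yields $g_{s_{j(l)}+t_k}x=g_{s_{j(l)}}\big(g_{t_k}x\big)\to g_{s_{j(l)}}y$ as $k\to\infty$; because in addition $t_k\to+\infty$, we may then pick $k=k(l)$ large enough that simultaneously $d\big(g_{s_{j(l)}+t_{k(l)}}x,\,g_{s_{j(l)}}y\big)<\tfrac1{2l}$ and $s_{j(l)}+t_{k(l)}>l$. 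Setting $r_l:=s_{j(l)}+t_{k(l)}$ we get $r_l\to+\infty$, all $g_{r_l}$ lie in $F_+$, and by the triangle inequality $d(g_{r_l}x,z)<\tfrac1l$, so $g_{r_l}x\to z$ along $r_l\to+\infty$, i.e.\ $x\in A(F_+,z)$.

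The case $z=\infty$ runs identically once the condition ``$d(\,\cdot\,,z)$ small'' is replaced by ``far from a basepoint'': fixing $x_0\in X$, choose $j(l)$ with $d\big(g_{s_{j(l)}}y,\,x_0\big)>l+1$, then choose $k(l)$ with $d\big(g_{r_l}x,\,g_{s_{j(l)}}y\big)<1$ and $r_l>l$, so that $d(g_{r_l}x,x_0)>l$ with $r_l\to+\infty$; hence $F_+x$ is unbounded.

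I expect the only genuinely delicate point to be the positivity of the concatenated times $r_l$: when $(s_j)$ escapes to $-\infty$, the shift $s_{j(l)}$ is a large \emph{negative} number, and it is precisely the freedom to take $t_{k(l)}$ as large as we wish --- which we have because $t_k\to+\infty$ --- that lets us overcome it while keeping $g_{r_l}x$ near $z$. This is why it is essential to fix $j(l)$ first and only afterwards select $t_{k(l)}$. Everything else is routine bookkeeping, and continuity of the $F$-action on $X$ (the standing hypothesis) is used exactly once, to push the limit $g_{t_k}x\to y$ through the fixed map $g_{s_{j(l)}}$.
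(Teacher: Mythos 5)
Your proposal is correct and follows essentially the same route as the paper's proof: flow $x$ toward $y$ along large positive times, then apply a fixed additional (possibly negative) time taking $y$ near $z$, using continuity to push the limit through and noting that the divergence of the first times dominates the fixed shift, so the concatenated times stay positive and unbounded. The paper phrases this with neighborhoods $W$ of $z$ (or of $\infty$) instead of your explicit metric/diagonal bookkeeping, but the argument is the same.
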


\begin{proof}
Let $x \in A(F_+,y)$, and let $(s_\ell)_{\ell \in \N}$ be an unbounded 
sequence in $\R_{\geq 0}$ with
\begin{equation*}
    \lim_{\ell \to \infty} g_{s_\ell}x = y.
\end{equation*}
Let $W \subset X$ be an open neighborhood of $z$. In the case, when $z = \infty$, we take $W$ to be an open set with 
bounded complement. 

By the hypothesis, there exists a time $t \in \R$ for which $g_t y \in W$. By the continuity of the action we have
\begin{equation*}
    \lim_{\ell \to \infty} g_{s_\ell}g_tx = g_t y \in W.
\end{equation*}
Note that the sequence $(s_\ell + t)_{\ell \in \N}$ is unbounded and eventually positive. Since $W$ was arbitrary, we conclude that $x \in A(F_+, z)$.
\end{proof}
\ignore{{Suppose} we have a point $x \in A(F_+,y) \cap E(F_+,z)$. By {the} hypothesis, take an unbounded sequence $(t_k)_{k \in \N} \subset \R$ with
\begin{equation}\label{eq: y_0 to y_1}
    \lim_{k \to \infty} g_{t_k} y= z
\end{equation}
(in the case $z = \infty$ this means that $g_{t_k} y$ leaves every compact subset of $X$).
Since $x$ is an element of $E(F_+,z)$, we can take an open neighborhood $W$ containing $z$ (if $z = \infty$ we can let $W$ be the complement of a compact subset of $X$) and   $t_0>0$ such that
\begin{equation}\label{eq: tx notin W}
    g_tx \notin W \text{  for all }t> t_0.
\end{equation}
Using the fact that $x \in A(F_+,y)$, we take an unbounded sequence $(s_k)_{k \in \N} \subset \R_{\geq 0}$ with
\begin{equation}\label{eq: x to y_0}
    \lim_{k \to \infty} g_{s_k} x = y.
\end{equation}
The convergence in \eqref{eq: y_0 to y_1} guarantees us some $p \in \N$ with 
\begin{equation*}
    {g_{t_{p}} 
    y \in W}.
\end{equation*}
The continuity of the action and the convergence in \eqref{eq: x to y_0} guarantees us that 
\begin{equation*}
    \lim_{k \to \infty} g_{t_p + s_k} x = g_{t_p}
    y \in W. 
\end{equation*}
However, for $k$ large enough, $t_p + s_k$ will be larger than $t_0$, contradicting 
\eqref{eq: tx notin W}.}

{In view of the above lemma it makes sense to look for conditions guaranteeing that the set  \equ{mixedplus} is reasonably large 
under the assumption that $y \in E(F,z)$. 
In the present paper we will  do it in the following set-up: we let $X = G/\Gamma$, where $G$ is  a real 
Lie group 
and $\Gamma \subset G$ is a 
lattice,
and    consider the $\R$-action on $X$ via the left multiplication by a one-parameter subgroup of $G$:
    \begin{equation}\label{gt}
        \big\{ g_t := \exp(ta_0) \in G: t \in \R \big\},
    \end{equation}
    where ${a_0}$ is an 
    element of 
    {$\fg:=\Lie(G)$, the Lie algebra of $G$}. 
    We shall fix once and for all an inner product on $\fg$ to induce a right-invariant Riemannian metric $\dist_G$ on $G$.
 Note that  the invariance of the metric implies that it is geodesically complete, and so the Hopf--Rinow theorem implies that it is proper (closed and bounded sets are compact). Then one obtains a metric on $X = G/\Gamma$ by setting
\begin{equation*}
    \dist_X(g_1\Gamma, g_2\Gamma) := \inf\big\{\dist_G(g_1\gamma, g_2): \gamma \in \Gamma \big\}.
\end{equation*}
This is the set-up in which we will attempt to study sets of the form  \equ{mixedplus}. We will denote by $m_X$  the $G$-invariant Haar probability measure on $X$, and by $m_G$ the 
Haar measure on $G$ scaled so that the measure of any fundamental domain for $\Gamma$ is equal to $1$. Further,
   {in this paper} we will 
   assume that $X$ is noncompact and $z = \infty$; that is, we will seek to construct bounded $F^+$-trajectories with prescribed limit points.  
    
  Recently the second- and third-named authors proved the following result. Take \linebreak  $G = \SL_2(\R)$, $\Gamma = \SL_2(\Z)$ and $X = G/\Gamma$. For $t,\alpha\in\R$ consider 
\begin{equation*}\label{eq: gtsl2}
        g_t := \left[ {\begin{array}{cc} e^t & 0 \\ 
    0 & e^{-t}  \end{array}}\right]\in G,\ F = \{g_t: t\in\R\},
    \end{equation*}
    and  
$${x_\alpha  := \left[ {\begin{array}{cc} 1 & \alpha \\ 
    0 & 1  \end{array}}\right]\Gamma\in X.} 
    \ignore{the latter being identified with the lattice in $\R^2$ generated by $\left[ {\begin{array}{cc} 1 \\ 
    0  \end{array}}\right]$ and $ \left[ {\begin{array}{cc}  \alpha \\ 
    1 \end{array}}\right]$.}$$
    
    \begin{thm}\label{main-lemma} \cite[Theorem 1.3]{KR1}
Let $X$ and $F$ be as above, and let $y \in E(F,\infty)$; then
\begin{equation}\label{dim1}
     \dim\big(\{ \alpha \in \R : x_\alpha \in  A(F_+,y) \cap E(F_+,\infty)\}\big) = 1.
\end{equation}
\end{thm}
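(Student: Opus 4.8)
The plan is to reduce the problem to a Cantor-type construction in the $\alpha$-line. Since the set in \eqref{dim1} is a subset of $\R$, its Hausdorff dimension is at most $1$, so it suffices to produce, for every $\e>0$, a subset of $\{\alpha\in\R:x_\alpha\in A(F_+,y)\cap E(F_+,\infty)\}$ of dimension at least $1-\e$, and then take a countable union over $\e\downarrow 0$. Fix $\e>0$. Write $u_s:=\left[\begin{smallmatrix}1&s\\0&1\end{smallmatrix}\right]$, so that $x_{\alpha+s}=u_sx_\alpha$ and $g_tu_sg_{-t}=u_{e^{2t}s}$; consequently, for an interval $J\subset\R$, the set $\{g_tx_\alpha:\alpha\in J\}$ is a horocyclic arc of length $\asymp e^{2t}|J|$. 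By Mahler's criterion, $x\in E(F_+,\infty)$ exactly when the forward orbit $\{g_tx:t\ge0\}$ stays in some compact subset of $X$, and $K_0:=\overline{Fy}$ is such a fixed compact set because $y\in E(F,\infty)$; we note in passing that, by Lemma~\ref{lem: obstruction}, this hypothesis on $y$ is also necessary for the set in \eqref{dim1} to be nonempty.

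I would build a compact $\mathcal{C}=\mathcal{C}_\e\subset\R$ by an inductive procedure with countably many \emph{stages}. At stage $n$ there is a finite disjoint family $\mathcal{J}_n$ of closed intervals, each the $\alpha$-preimage of a horocyclic arc of length $\asymp 1$ sitting inside a fixed compact core $K=K(\e)$ "at time $t_n$", with $t_n\uparrow\infty$. The \emph{regular} stages implement the mechanism behind the classical equality $\dim\mathbf{Bad}=1$: given $J\in\mathcal{J}_n$ with arc $\mathcal{A}$ at time $t_n$, flow forward by a large fixed time $\tau=\tau(\e)$, so $g_\tau\mathcal{A}$ has length $\asymp e^{2\tau}$, and subdivide it into $\asymp e^{2\tau}$ sub-arcs of length $\asymp 1$. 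Applying quantitative non-divergence for the horocycle flow on $\SL_2(\R)/\SL_2(\Z)$ at every integer time of $[t_n,t_n+\tau]$ — at a depth chosen large enough (depending on $\e$, hence on $\tau$, but fixed) that the union of these $\asymp\tau$ estimates discards only a proportion $\le 1/\tau$ of the sub-arcs — one keeps $(1-O(1/\tau))e^{2\tau}$ sub-arcs whose forward orbit stays in a slightly larger fixed compact set $K'$ throughout $[t_n,t_n+\tau]$; discarding the rest (and their neighbours, to leave gaps) one lets $\mathcal{J}_{n+1}$ consist of the $\alpha$-preimages of the survivors, with $t_{n+1}=t_n+\tau$. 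Running only regular stages already produces a Cantor set of dimension $\ge 1-O(1/\tau^2)\ge1-\e$, all of whose points have forward orbit in the fixed compact set $K'$.

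The new ingredient is a sparse sequence of \emph{targeting} stages $n=n_i$, forcing points of $\mathcal{C}$ also to approach $y$. Fix $\e_i\downarrow0$. At stage $n_i$, pick one $J\in\mathcal{J}_{n_i}$, thicken its arc $\mathcal{A}$ slightly in the contracting and neutral directions to a set $\widehat{\mathcal{A}}$ of positive measure, and use mixing of the $F$-action on $(X,m_X)$ to find a time $T_i$ so large that $g_{T_i}\widehat{\mathcal{A}}$ meets the $\tfrac{\e_i}{3}$-ball around $y$; a short continuity/contraction argument then yields a sub-arc $\mathcal{B}_i\subset g_{T_i}\mathcal{A}$ of length $\asymp\e_i$ contained in the ball $B(y,\e_i)$ (equidistribution of long expanding horocycles could be used in place of mixing). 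Since $\mathcal{B}_i$ is a horocyclic arc lying in that small ball and $g_sK_0=K_0$ for all $s$, using $g_tu_sg_{-t}=u_{e^{2t}s}$ one checks that flowing $\mathcal{B}_i$ forward for any time up to $\tfrac12\log(1/\e_i)$ keeps it inside a fixed compact neighbourhood of $K_0$, at the end of which it again has length $\asymp1$; this is the single point at which the boundedness of the full $F$-orbit of $y$ is used in an essential way. From this arc one runs a single regular sub-step, obtaining $\mathcal{J}_{n_i+1}$ with $t_{n_i+1}=t_{n_i}+T_i+\tfrac12\log(1/\e_i)+\tau$. A targeting stage shrinks the surviving intervals by the large factor $\asymp\e_i e^{-2(T_i+\tau)}$ while keeping only $\asymp e^{2\tau}$ children, so by itself it contributes nothing to the dimension; but one is free to choose the $n_i$ as sparse as desired, and in particular to perform stage $n_i$ only after enough regular stages have been carried out that the cumulative logarithm of the number of surviving children already dominates $\sum_{k\le i}\bigl(T_k+\log(1/\e_k)\bigr)$. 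With this scheduling the standard lower bound for the Hausdorff dimension of a Cantor set with gaps still yields $\dim\mathcal{C}_\e\ge1-\e$.

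Finally, every $\alpha\in\mathcal{C}_\e$ has the required properties: by construction $g_tx_\alpha$ remains in a fixed compact set for all $t\ge0$, so $x_\alpha\in E(F_+,\infty)$, and from the targeting stages $g_{t_{n_i}+T_i}x_\alpha\in\mathcal{B}_i\subset B(y,\e_i)$, whence $g_{t_k}x_\alpha\to y$ along $t_k:=t_{n_k}+T_k\to\infty$, i.e.\ $x_\alpha\in A(F_+,y)$; taking the union of the $\mathcal{C}_{1/k}$ proves \eqref{dim1}. I expect the main obstacle to be making cusp-avoidance and recurrence to $y$ compatible: quantitative non-divergence only controls the orbit along the horocyclic arc at a single (large) time, whereas an expanding horocyclic arc inevitably ventures into the cusp as it lengthens, and after a near-visit to $y$ of precision $\e_i\to0$ the chosen sub-arc must be re-expanded for a time $\sim\log(1/\e_i)\to\infty$ while still avoiding the cusp. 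The first difficulty is handled by applying non-divergence at every unit time step within a stage, at the cost of a larger (but, for fixed $\e$, fixed) compact core; the second is precisely where the hypothesis that $y$ has a \emph{bounded} full orbit is indispensable, as it forces the re-expanding arc to shadow the bounded orbit of $y$. One must also check, as indicated, that the sparse and wasteful targeting stages do not pull the Hausdorff dimension below $1-\e$.
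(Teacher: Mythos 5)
Your overall architecture --- alternating boundedness stages with sparsely scheduled targeting stages, followed by a Cantor-type dimension estimate --- is exactly the scheme the paper uses (Sections 3--5, in the tradition of Kleinbock--Margulis), and your regular stages, your re-expansion after a near-visit (where you correctly use the boundedness of $Fy$), and your sparse scheduling are all sound. The genuine gap is inside the targeting stage: you select a point of the thickened arc that lies in $B(y,\e_i)$ at the large mixing time $T_i$, but nothing controls the orbit of that point during the whole stretch $[t_{n_i},\,t_{n_i}+T_i]$. Mixing gives information only at time $T_i$; at intermediate times the selected point can make cusp excursions of height not bounded by any fixed compact set, and since $T_i\to\infty$ (it must grow as $\e_i\to 0$, because the target ball shrinks), such uncontrolled stretches occur at infinitely many scales along the orbit of every limit point. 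Hence membership in $E(F_+,\infty)$ is not established; the "compatibility" difficulty you flag at the end is real, but it sits in this stretch, not only in the re-expansion phase. Nor can you repair it by running unit-time non-divergence through $[t_{n_i},t_{n_i}+T_i]$: with a fixed compact core each application discards a fixed proportion of the arc, so over a time of length $T_i$ the surviving set is far too small for a measure count to guarantee that it meets the set (of measure comparable to that of $B(y,\e_i)$) of points hitting the target at time $T_i$.

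The paper's Proposition \ref{prop: approach} shows the idea you are missing: instead of waiting a long time to hit an $\e_i$-ball around $y$, one hits, after a \emph{bounded} time $t$ (uniform in $\e$, supplied by equidistribution at a fixed scale $\sigma$, Proposition \ref{prop: equidistirbution for small balls}), a $\sigma$-ball around the point $g_{-s_1}y$ far back on the orbit of $y$; one then decomposes the displacement into expanding, neutral and contracting parts, absorbs the expanding part into a shift of the base point on the arc, and lets the flow itself carry the point toward $y$. During the entire long interval the orbit shadows the bounded orbit $Fy$ (the contracting error decays, the neutral error stays bounded), so it never leaves a fixed compact set, and the precision $\e_i$ comes from contraction along the flow rather than from a long mixing time. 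A secondary point: as written, your targeting stage keeps only the descendants of a single chosen $J\in\mathcal{J}_{n_i}$; taken literally this makes the limit set a nested intersection of single intervals, i.e.\ a point. The targeting must be performed inside every surviving interval, with a common time (this is where uniformity of the equidistribution over base points in a compact set is needed), exactly as in the paper each parent keeps its own single approach-step child.
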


Here and hereafter "dim" stands for \hd.
 In fact, even though it was not mentioned in \cite{KR1}, the proof actually demonstrates the thickness of the set in the left-hand side of \eqref{dim1}. Also,  by the standard "slicing" argument (see 
\S \ref{sec: reduction to H}), it can be easily derived from Theorem \ref{main-lemma} 
that  for any $y \in E(F,\infty)$ the set $A(F_+,y) \cap E(F_+,\infty)$ 
is thick.
The proof of the above theorem used the technique of continued fractions, and in fact the result can be interpreted within the framework of Diophantine approximation. Namely it constructs numbers that are badly approximable but not $\nu$-Dirichlet improvable, where $\nu$ is an irreducible norm on $\R^2$ whose unit ball is not a parallelogram. See \cite{AD, KR, KR1, KRS} for more detail, and \cite{MS} for some new results in that direction.  

\smallskip
One of the main results of this paper is a generalization of Theorem \ref{main-lemma} to a wider class of flows on \hs s.

\begin{thm}\label{thm: main theorem}
    Let $X = G/\Gamma$  and $F = \{g_t : t\in\R\}\subset G$, where 
    \begin{itemize}
    \item $G$ is  a 
    Lie group with discrete center,
    \item $\Gamma$ is a non-uniform 
    lattice in $G$,
\item $g_t$ is as in \eqref{gt}, where $a_0 \in \fg\nz$ is $\ad$-diagonalizable over $\Co$;
 \item the action of $F$  on $(X,m_X)$ is mixing.
\end{itemize}
    Let $y \in E(F,\infty)$. Then for any non-empty open $U\subset X$ we have 
    \begin{equation}\label{eq: dim X - dim Z + 1}
        \dim \big(U\cap A(F_+,y) \cap E(F_+,\infty)\big) \geq \dim X - \dim Z + 1,
    \end{equation}
    where $Z \subset G$ is the {neutral subgroup of $G$ with respect to $F$} . 
\end{thm}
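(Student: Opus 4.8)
The plan is to reduce the statement to a construction taking place inside the unstable-type horospherical directions of $F$ and then run a "shadowing along a tree of return times" argument analogous to (but more robust than) the continued-fraction construction of \cite{KR1}. First I would set up the Lie-algebra decomposition $\fg = \fg^+ \oplus \fg^0 \oplus \fg^-$ into the sums of eigenspaces of $\ad_{a_0}$ with positive, zero, and negative real part of the eigenvalue; here $Z = \exp(\fg^0)$ is the neutral subgroup and $\dim Z = \dim \fg^0$, while $\dim \fg^+ = \dim \fg^- = (\dim X - \dim Z)/2$ when the relevant eigenvalues come in $\pm$ pairs, but in general I only need that the expanding horospherical subgroup $H := \exp(\fg^+)$ has $\dim H = \dim X - \dim Z - \dim\fg^-$ — in any case the target dimension $\dim X - \dim Z + 1$ should be realized by perturbing the base point $y$ (or rather a well-chosen point near an appropriate compact piece of its orbit) along an $H$-slice together with one extra direction inside $\fg^0 \oplus \fg^- \oplus \R a_0$ coming from the flow direction itself. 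The "slicing" remark in \S\ref{sec: reduction to H} (referenced in the excerpt) presumably packages exactly this: it suffices to show that for a fixed base point, the set of $h \in H$ (in a small box) with $hx_0 \in A(F_+,y) \cap E(F_+,\infty)$ has full Hausdorff dimension in $H$, and then a Fubini/Marstrand-type slicing restores the $-\dim\fg^-$ and the $+1$.

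Next I would build the self-similar Cantor-like set of parameters. Fix a base point $x_0 = h_0 y$ with $y \in E(F,\infty)$, so there is a compact $K \subset X$ and a sequence $t_j \to \infty$ with $g_{t_j} x_0 \in K$ avoiding the cusp. The key input is mixing of the $F$-action on $(X,m_X)$: this gives, via a standard effective-equidistribution-free argument (e.g. the "thick target" mechanism of \cite{KM} combined with the shadowing in \cite{KR1, KRS}), that for any small ball $B \subset H$ and any large time $T$, a definite proportion of points $hx$ with $h \in B$ have $g_T hx$ both (i) close to $y$ and (ii) back inside a fixed compact set — the point being that the $F$-pushforward of the $H$-leaf measure on $B$ becomes close to $m_X$, which both charges every neighborhood of $y$ and has no mass in the cusp. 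Iterating this at a sequence of times $T_n \uparrow \infty$ chosen so that the return-to-$y$ events happen at times $\to\infty$ while the orbit stays in a growing but controlled family of compact sets, one constructs a nested sequence of sub-boxes $B_0 \supset B_1 \supset \cdots$ whose diameters shrink geometrically; any $h$ in the intersection gives $hx_0 \in A(F_+,y)$, and the "return to compact sets" clause gives $hx_0 \in E(F_+,\infty)$ provided the escape is controlled uniformly — this uniformity is exactly where one must be careful and is the analogue of the "bounded partial quotients" condition. A Cantor-set / mass-distribution (Frostman) argument then yields that the intersection has full dimension in $H$.

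The main obstacle, as I see it, is making the two requirements compatible \emph{simultaneously}: approaching $y$ requires pushing the $H$-leaf for a long time so that it equidistributes (by mixing) and hence hits a neighborhood of $y$, but "escaping $\infty$" requires keeping the whole forward trajectory in a fixed compact set, which a long equidistributing piece emphatically does \emph{not} do. The resolution — and the technical heart of the paper — must be a careful time-bookkeeping: one only lets the leaf equidistribute on a \emph{bounded} window of time long enough to place the next generation of sub-boxes near $y$, then immediately "catches" the trajectory back into a compact set using the fact that $g_{t_j} x_0 \in K$ (shadowing the good times of the base orbit), and the sizes of the admissible sub-boxes must be tuned so the geometric decay of diameters still leaves a full-dimensional Cantor set. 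A secondary difficulty is that $a_0$ is only $\ad$-diagonalizable over $\Co$, so $\fg^+$ and $\fg^0$ can involve genuine rotations; the flow on the neutral directions is then an isometry-up-to-polynomial-drift rather than trivial, and one has to check that the neutral $Z$-directions really do cost exactly $\dim Z$ and cannot be used to gain dimension — this is presumably why the sharp bound has $-\dim Z$ rather than something better, and why the clean "$+1$" survives (it comes from the flow direction, which always can be used). Controlling the center: the hypothesis that $G$ has discrete center is used to ensure $F$ is a genuine $\R$ (not a torus) and that mixing is not obstructed, so no issue there beyond citing it.
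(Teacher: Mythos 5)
Your overall strategy is the same as the paper's: the $\fh\oplus\fn\oplus\fh^-$ decomposition, reduction to a full-dimension statement on the expanding group $H$, a mixing-based Cantor (tree-like) construction alternating ``stay in a compact set'' steps with sparse ``approach'' steps, and a Marstrand-type slicing to add $\dim H^-+1$ (the $+1$ from the flow direction, which indeed lies in $Z$). The bookkeeping issue you single out (long equidistribution windows versus boundedness, resolved by sparse approach steps shadowing the bounded orbit $Fy$) is exactly the heart of the paper's induction.

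However, there is a genuine gap in the slice statement you propose to prove, namely that $\{h\in H: hx_0\in A(F_+,y)\cap E(F_+,\infty)\}$ has full dimension in $H$, with approach to $y$ \emph{itself}. Membership in $A(F_+,y)$ requires $g_{t_k}hx_0\to y$, so the closeness must tend to $0$ along the approach steps; but when mixing places $g_{t}hx$ in a small ball around (a point on the orbit of) $y$, the displacement decomposes as an $H$-part (absorbable by re-choosing $h$), an $H^-$-part (killed by flowing forward while shadowing the bounded orbit $Fy$), and a $Z$-part, which is neither contracted by the forward flow nor absorbable into the $H$-leaf: it persists at the size of the hitting radius $\sigma$. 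Shrinking $\sigma_n\to 0$ along the construction does not rescue this, because the equidistribution time $t_0(\sigma_n)$ needed to hit a $\sigma_n$-ball from the current base point tends to infinity, and during that waiting window the trajectory of the chosen cell is uncontrolled, so the forward orbit is only confined to $g_{[-t_0(\sigma_n),0]}K$ --- an unbounded family --- and $E(F_+,\infty)$ is lost. This is precisely why the paper's Theorem \ref{thm: theorem on H} only asserts approach to the fixed-size set $B_Z(\sigma_0)y$, and why the reduction to Theorem \ref{thm: main theorem} contains an extra step you do not have: one corrects the residual neutral displacement by multiplying on the left by a suitable $w\in Z$, chosen along a subsequence on which $\Phi_t|_Z$ converges (this uses \eqref{eq: look at me}, i.e.\ precompactness of $\mathrm{Ad}(g_t)|_{\fn}$, which holds because $a_0$ is $\ad$-diagonalizable --- so your ``isometry-up-to-polynomial-drift'' caveat is also off: there is no drift, and the precompactness is used essentially). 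The $Z$-directions are thus not merely ``dimension-neutral''; they obstruct $A(F_+,y)$ for the points your construction produces, and repairing this (relaxed target in $H$, then a $Z$-correction folded into the slicing) is the missing idea.
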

See Definition \ref{defn: expanding, contracting and neutral groups} below for a precise definition of $Z$.
In view of Moore's Theorem \cite{Mo}, the mixing assumption is satisfied when $G$ is a connected semi-simple 
    Lie group
    with finite center and with no compact factors, and $\Gamma\subset G$ is irreducible. However mixing is not restricted to the semi-simple case, see e.g.\  \cite{BM, Kl}.  
Note that when $G =  \SL_2(\R)$ (that is, under the assumptions of Theorem \ref{main-lemma}) we know that {$\dim Z=\dim F = 1$}, and thus the set $A(F_+,y) \cap E(F_+,\infty)$ is thick. Moreover, in the latter case one can generalize Theorem~\ref{main-lemma}  as follows.

  \begin{thm}\label{thm: dense-orbits}  
Let $G$, $\Gamma$ and $F$ be as in Theorem \ref{thm: main theorem}, and assume in addition that \linebreak {$\dim Z = 
1$}.
Then for any {compact} 
F-invariant
subset $B$ of $E(F,\infty)$ there exists a thick set of $x \in E(F_+,\infty)$ such that the closure of the (bounded) trajectory $F_+x$ contains $B$.
 \end{thm}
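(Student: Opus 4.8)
The plan is to feed a dense sequence in $B$, one target at a time, into the Cantor‑set construction underlying Theorem~\ref{thm: main theorem}.

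\textbf{Reduction.} Since $B$ is compact it has a countable dense subset $\{y_i : i\in\N\}$, and each $y_i\in E(F,\infty)$ by hypothesis. Fix a nonempty open $U\subset X$. It suffices to exhibit a subset $\ca C\subset U$ with $\dim\ca C=\dim X$ such that every $x\in\ca C$ satisfies: (a) $g_tx$ lies in a fixed compact set $K\subset X$ for all $t\ge 0$; and (b) $\liminf_{t\to\infty}\dist_X(g_tx,y_i)=0$ for every $i$. Indeed, (a) makes $\overline{F_+x}$ a compact subset of $K$, hence $x\in E(F_+,\infty)$, while (b) forces $y_i\in\overline{F_+x}$ for all $i$, so $\overline{F_+x}\supseteq\overline{\{y_i\}}=B$. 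Running this for each $U$ and taking the union shows the set of all such $x$ is thick, which is the claim. Note that because (b) asks for approach at \emph{arbitrarily large} times, it is preserved if $x$ is replaced by $g_sx$ or by $\exp(v)x$ with $v\in\fg^-$, a fact used in the thickening step below.

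\textbf{The engine (and the main obstacle).} Via the slicing reduction of \S\ref{sec: reduction to H} the construction takes place inside a single orbit of the expanding horospherical subgroup $H=\exp(\fg^+)$. What I extract from the $\dim Z=1$ case of Theorem~\ref{thm: main theorem}, in a slightly strengthened \emph{restartable} form, is a one‑step statement: there is a compact $K$ such that for every $w\in E(F,\infty)$, every $\delta>0$, and every nonempty open box $V$ in an $H$‑orbit meeting $K$, there exist a time $T>0$ and a finite family of pairwise disjoint sub‑boxes $V_\alpha\subset V$ with (i) $g_tx\in K$ for all $x\in V_\alpha$ and $t\in[0,T]$, (ii) $\dist_X(g_Tx,w)<\delta$ for all $x\in V_\alpha$, (iii) each $g_TV_\alpha$ again contains a box of some fixed size $\rho_0>0$ independent of $\delta$, and (iv) the number and sizes of the $V_\alpha$ are controlled well enough that iterating produces subsets of full Hausdorff dimension $\dim H$ in the $H$‑orbit. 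Items (i) and (ii) are exactly what the proof of Theorem~\ref{thm: main theorem} achieves — (ii) via (topological) mixing of $F$, (i) via the escaping‑to‑infinity part — and (iv) is its quantitative heart, where $\dim Z=1$ enters. The genuinely new point, which I expect to be the main obstacle, is (iii): one must re‑inspect the construction to verify that after a controlled return near $w$ the family $g_TV_\alpha$ still contains a full‑sized box from which the process can be restarted, and — crucially — that the dimension bound (iv) does not degrade over infinitely many restarts with $\delta\to 0$, i.e.\ that the per‑step dimension loss is summable to zero, which amounts to the branching rate out of any box of size $\rho_0$ being bounded below uniformly.

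\textbf{Diagonalisation and thickening.} Enumerate $\N\times\N$ as $n\mapsto(i_n,k_n)$ so that every $i$ occurs infinitely often. Starting from a box $V^{(0)}\subset U\cap K$ (shrinking $K$ so it meets $U$), inductively apply the one‑step statement to each box of the level‑$n$ family with $w=y_{i_n}$, $\delta=1/k_n$, and target scale small enough that all level‑$(n+1)$ boxes have diameter $<2^{-n}$; let $T_1<T_2<\cdots$ be the accumulated return times. The nested intersection $\ca K$ of these families is, by standard mass‑distribution estimates for Cantor sets together with (iv), a subset of the $H$‑orbit with $\dim\ca K=\dim\fg^+$. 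Every $x\in\ca K$ has $g_tx\in K$ for $t\in[0,T_n]$ for all $n$, hence for all $t\ge 0$, and $\dist_X(g_{T_n}x,y_{i_n})<1/k_n$ with $T_n\to\infty$; so (a) and (b) hold. Finally, using that $\Ad(g_t)$ contracts $\fg^-$ (so $g_t\exp(v)x$ stays near $g_tx$ for $t\ge 0$ and converges to the same limits along the chosen times) and that $\dim\fg^0=\dim Z=1$, the thickened set $\ca C:=\{\exp(v)g_sx : v\in\fg^-\text{ small},\ s\in[0,s_0],\ x\in\ca K\}\cap U$ still satisfies (a) and (b) for a slightly enlarged compact set, and, being a bi‑Lipschitz image of a product of $\ca K$ with a smooth manifold factor of dimension $\dim\fg^-+1$, has $\dim\ca C=\dim\fg^-+1+\dim\fg^+=\dim X$. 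Since $U$ was arbitrary, this completes the proof.
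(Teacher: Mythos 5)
Your overall architecture (Cantor-type construction inside an $H$-orbit, targets fed in from a countable dense subset of $B$, then thickening by the $H^-$ and flow directions using $\dim Z=1$) matches the paper's, but the entire difficulty is concentrated in the ``restartable one-step statement'' that you assert rather than prove, and that statement is not something you can ``extract'' from Theorem \ref{thm: main theorem} or its proof. The tension is between your items (i), (ii) and (iv): mixing/equidistribution (Propositions \ref{prop: KM equidistribution generic} and \ref{prop: equidistirbution for small balls}) controls only where $g_Thx$ sits at the single time $T$, not the trajectory on all of $[0,T]$; so the positive-measure family of $h$ landing in $B_X(w,\delta)$ at time $T$ comes with no boundedness along the way, while the paper's mechanism for keeping the orbit in $K$ during the long approach time is to shadow the bounded orbit $Fy$ of the target (the computation \eqref{eq: part ii computation} in Proposition \ref{prop: approach}), and that shadowing argument produces a \emph{single} sub-box $W=\Phi_{-s_\varepsilon}(V\gamma)$, not a family with branching rate bounded below. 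Your claim that ``the branching rate out of any box of size $\rho_0$ is bounded below uniformly'' is also inconsistent with the way you propose to achieve (ii): if the $\delta_n$-approach is obtained from equidistribution, the proportion of surviving measure at that step is of order $m_G\big(B_G(\delta_n)\big)\to 0$, so the per-step loss is not uniform, and you give no argument that it can be beaten by letting the step times grow while simultaneously retaining (i) and (iii).

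The paper resolves exactly this tension differently: it does \emph{not} strengthen the approach step, but accepts that each approach step (Proposition \ref{prop: approach}) contributes a singleton (density $e^{-\chi s_{\varepsilon_n}}$) and interleaves it with many generic steps (Proposition \ref{prop: generic}, density $\geq m_X(K)^2$ per step of fixed length $t$), inserting the approach steps along a sparse injection $f:\N\to\N$ chosen \emph{after} the times $s_{\varepsilon_n}$ are fixed, so that in Theorem \ref{thm: urbanski} the accumulated loss $\sum_{f(n)\le k}\chi s_{\varepsilon_n}$ is sublinear in $T_k$. Until you either prove a many-box approach step with boundedness on the whole interval (which would require redoing the shadowing argument to extract $\asymp e^{\chi s_\varepsilon}$ translates $\gamma$, not just one) or adopt the sparse-insertion bookkeeping, your proof has a genuine gap at its core. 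A smaller inaccuracy: $\dim Z=1$ is not only a dimension-count convenience; in the paper it is also what lets one upgrade approach to the fuzzy target $B_Z(\sigma_0)y$ (all that Theorem \ref{thm: theorem on H} can give, since the neutral directions cannot be steered) to approach to $y$ itself, because $Z=F$ and the fuzz lies along the flow; your engine demands $\dist_X(g_Tx,w)<\delta$ outright, which silently presupposes this absorption.
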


In addition to $G =  \SL_2(\R)$, the assumptions of the above theorem hold when \linebreak $G =  \SL_2(\R)\ltimes \R^2$; the choice $\Gamma =  \SL_2(\Z)\ltimes \Z^2$ yields the space of two-dimensional unimodular grids, cf.\ \cite{Kl}.
  \begin{cor}\label{cor: dim-drop}  
Let $G$, $\Gamma$, $X$ and $F$ be as in Theorem \ref{thm: dense-orbits}; then for any $\varepsilon > 0$ there exists a thick set of $x\in X$ such that the orbit closure $\overline{F^+x}$ is compact and has Hausdorff dimension greater than $\dim X - \varepsilon$.
 \end{cor}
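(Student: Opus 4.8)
The plan is to obtain Corollary~\ref{cor: dim-drop} as an immediate consequence of Theorem~\ref{thm: dense-orbits}: all that is really needed is to feed that theorem a single compact $F$-invariant set $B\subseteq E(F,\infty)$ whose Hausdorff dimension exceeds $\dim X-\varepsilon$, and then read off the conclusion.

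To build such a $B$ I would argue by exhaustion. Since $X=G/\Gamma$ is a locally compact, $\sigma$-compact metric space, fix an increasing sequence of compact sets $K_1\subseteq K_2\subseteq\cdots$ whose interiors cover $X$, and for each $n$ set $B_n:=\{x\in X:Fx\subseteq K_n\}=\bigcap_{t\in\R}g_{-t}K_n$. Each $B_n$ is closed, being an intersection of preimages of the closed set $K_n$ under the homeomorphisms $x\mapsto g_tx$, and it is contained in $K_n$, hence compact; it is $F$-invariant because $F$ is a group, so that $F(g_sx)=Fx$ for every $s$; and, the orbit of any point of $B_n$ being precompact, we have $B_n\subseteq E(F,\infty)$. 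Conversely, if $x\in E(F,\infty)$ then $\overline{Fx}$ is compact, hence covered by finitely many of the interiors of the $K_n$, so $Fx\subseteq K_n$ for some $n$; thus $E(F,\infty)=\bigcup_n B_n$. Now invoke the (known, and already recalled) fact that $\dim E(F,\infty)=\dim X$ together with the countable stability of Hausdorff dimension: these give $\dim X=\dim E(F,\infty)=\sup_n\dim B_n$, so there is an $n$ with $\dim B_n>\dim X-\varepsilon$. Set $B:=B_n$; this is a nonempty compact $F$-invariant subset of $E(F,\infty)$ with $\dim B>\dim X-\varepsilon$.

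Applying Theorem~\ref{thm: dense-orbits} to this $B$ then produces a thick set $S$ of points $x\in E(F_+,\infty)$ for which $\overline{F_+x}\supseteq B$. For each such $x$ the closure $\overline{F_+x}$ is compact, because $x\in E(F_+,\infty)$, and it contains $B$, so that $\dim\overline{F_+x}\ge\dim B>\dim X-\varepsilon$; moreover a subset of $X$ that is thick when regarded inside $E(F_+,\infty)$, i.e.\ one that meets every nonempty open subset of $X$ in a set of full Hausdorff dimension, is in particular a thick subset of $X$. Hence $S$ is the asserted thick set. The deduction is thus mostly bookkeeping; the only substantive ingredients are Theorem~\ref{thm: dense-orbits} and the full-dimensionality of $E(F,\infty)$, and the single point requiring care is that the exhaustion argument must hand us a compact invariant $B$ that simultaneously lies in $E(F,\infty)$ and carries almost all of $\dim X$---which, as above, it does.
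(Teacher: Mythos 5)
Your proposal is correct and follows essentially the same route as the paper: produce a compact $F$-invariant subset $B\subset E(F,\infty)$ with $\dim B>\dim X-\varepsilon$ and feed it to Theorem~\ref{thm: dense-orbits}. The only cosmetic difference is that you manufacture $B$ by exhausting $E(F,\infty)$ by the sets $B_n=\{x:Fx\subset K_n\}$ and using countable stability of Hausdorff dimension together with the full-dimensionality of $E(F,\infty)$, whereas the paper quotes this existence statement directly from \cite[Theorem 1.1]{KM}; your exhaustion argument is just an explicit derivation of that same input.
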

 \begin{proof}
 When $
X$ and $F$ are as in Theorem \ref{thm: main theorem}, it follows from 
 \cite[Theorem 1.1]{KM} that, given $\varepsilon>0$, one can find a large enough compact set $K \subset X$ such that the compact $F$-invariant set
 \begin{equation*}
     B = \{x \in X: Fx \subset K\}
 \end{equation*}
 has Hausdorff dimension greater than $\dim X - \varepsilon$.
Under the additional hypothesis of $\dim Z = 1$, an application of Theorem \ref{thm: dense-orbits} to $B$ then gives the result.
 \end{proof}

 \noindent{\bf Acknowledgements.}
    The second-named author is grateful to ETH (Z\"urich) for its hospitality during a sabbatical stay in 2022, and to Barak Weiss for helpful discussions. The third-named author thanks N.\ Chandgotia, T.\ Mesikepp and Y.\ Peres for holding public-access office hours at TIFR-CAM, Peking University and Tsinghua University respectively.

 \section{Reduction to the expanding subgroup}\label{sec: reduction to H}
 In this and the remaining part of the paper we let $G$, $\Gamma$  and $F$ be as in Theorem \ref{thm: main theorem}. Recall that $F = \{g_t =\exp(ta_0): t\in\R\}$, where 
   ${a_0}\in \fg\nz$ is    $\ad$-diagonalizable over $\mathbb{C}$.
   \begin{defn}[Expanding, contracting and neutral subgroups]\label{defn: expanding, contracting and neutral groups} Consider 
\begin{equation*}
   \begin{split}
       \fh &:= \left\lbrace b \in \fg: \lim_{t \to -\infty} \Ad(g_t)b = 0\right\rbrace, \\
         \fh^- &:= \left\lbrace b \in \fg: \lim_{t \to \infty} \Ad(g_t)b = 0\right\rbrace,\\
         \fn &:= \left\lbrace b \in \fg: \sup_{t \in \R} \|\Ad(g_t)b\| < \infty\right\rbrace, 
    \end{split}
   \end{equation*}
   and let $H$, $H^-$ and $Z$ be the connected Lie subgroups of $G$ associated to the Lie algebras $\fh$, $\fh^-$ and $\fn$ respectively. From the diagonalizability assumption, we have 
   \begin{equation}\label{eq: real decomposition}
     \fg = \fh \oplus \fn \oplus \fh^-,
 \end{equation}
   hence $G$ is locally (in a small neighborhood of identity) a direct product of $H$, $H^-$ and $Z$.   In fact, if one considers the subalgebras of $\fg \otimes_\R \Co$ which are direct sums of eigenspaces for $\ad_{a_0}\otimes 1$ corresponding to eigenvalues with positive, zero and negative real part respectively, then one obtains the above Lie algebra decomposition \eqref{eq: real decomposition} simply by intersecting with $\fg$. \end{defn}
We remark that   $\fn$ is a proper subalgebra of $\fg$, otherwise $\Ad(g_t)$ would belong to a compact group and the mixing hypothesis would fail.
    Moreover, since $G$ admits a lattice,
    the modular function of $G$ is constant. Thus ${\ad_{a_0}}$ has trace $0$, and, therefore, both $\fh$ and $\fh^-$ (hence $H$ and $H^-$ as well) are non-trivial.
\begin{defn}[Measures on $H$ and $\tilde H$]\label{horosph}
   We fix $m_H$, a Haar measure on $H$. We further consider the connected subgroup $\tilde{H}$ associated to the Lie algebra $\fn \oplus \fh^-$ and a left Haar measure $m_{\tilde{H}}$ on $\tilde{H}$, scaled so that $m_G$ is a direct product of $m_H$ and $m_{\tilde{H}}$, see \cite[Ch.~VII, \S 9, Proposition 13]{B} or \cite[Theorem 8.32]{Kn} for justification.
\end{defn}

By ordering the eigenspaces of $\ad_{a_0}\otimes 1$ with respect to their real parts, we see that $H$ is nilpotent and hence unimodular. Thus, $m_H$ is a bi-invariant Haar measure on $H$. For future reference, we also note that $H$ is 
    simply connected. Indeed, note that $\Ad|_{H} : H \to \Ad(H)$ is a covering map. By choosing a basis compatible with the decomposition in \eqref{eq: real decomposition}, $\Ad(H)$ can be realized as a subgroup of {the group} of upper-triangular matrices. Thus $H$ covers a simply-connected space (see \cite[Corollary 1.134]{Kn}) and must itself be simply connected.
    


 \begin{defn}[Conjugation 
 {by $g_t$}]\label{defn: congujation}
    We use the notation, for each $t \in \R$,
    \begin{equation*}
        \Phi_t:G \to G;\ \Phi_t(g) := g_t gg_{-t}.
    \end{equation*}
    In view of the commutation relation 
        $\Ad\big(\exp(b)\big) = e^{\ad_{b}}$  for all  $b \in \fg$,
    we have
    \begin{equation*}\label{conj}
        \Phi_t\big(\exp (b)\big)= \exp(e^{t\ad_{a_0}}b)\ \ \text{for all $b \in \fg$ and }t \in \R.
    \end{equation*}
    In particular,   $H, H^-$ and $Z$ are invariant under $\Phi_t$, and the latter  
  restricts to a smooth  isomorphism {of} 
  these groups.
   We can thus define 
   the Radon--Nikodym derivative
    \begin{equation}\label{eq: chi t}
        \frac{dm_H}{d(\Phi_t)_*m_H} = e^{\chi t},\text{ where }\chi := \tr\,\ad_{a_0}|_{\fh}.
    \end{equation}
    The equality above follows since 
    $\exp: \fh \to H$ is a diffeomorphism and $m_H$ is the  pushforward of a translation invariant measure on $\fh$ (see the proof of \cite[Theorem 1.127]{Kn}). Note that $\chi >0$.
    \end{defn}

In what follows for a closed subgroup $P$ of $G$ we  use the  inner product on $\Lie(P)$ induced from $\fg$ to obtain the right-invariant Riemannian distance function $\dist_P$ on $P$.  
For $g\in P$, $x \in X$ 
and $r>0$ we use the notation
    \begin{equation*}
        B_P(g, r)
        \text{ and } B_X(x,r)
    \end{equation*}
    to denote balls in $P$ with respect to $\dist_P$ and balls in $X$ with respect to $\dist_X$.
    When the base-point is the identity   $e_G\in G$, 
    we use the notation $B_P(r):= B_P(e_G,r)$.

     The following lemma summarizes the expanding, contracting and neutral properties of {$\Phi_t$ restricted to} the subgroups $H$, $H^-$ and $Z$. It easily follows from Definition \ref{defn: expanding, contracting and neutral groups} and the  locally bi-Lipschitz property of the exponential map $\fg\to G$.
     
    \begin{lem}\label{lem: contracting expanding explicit computation}
    If $h \in H$ and $h^- \in H^-$, we have
    \begin{equation*}
        \lim_{t\to -\infty} {\Phi_t(h)} = e_G \ \text{ and } \ \lim_{t\to \infty} {\Phi_t(h^-)}  = e_G.
    \end{equation*}
   {Also} there exist constants $C, {r_0} > 0$ such that
    \begin{equation*}\label{eq: Z conjugate distances}
       { \dist_Z\big({\Phi_t(z)},e_G\big) \leq C\dist_Z(z,e_G)}.
    \end{equation*}
     for all $z \in B_Z({r_0})$ and $t \in \R$.
    \end{lem}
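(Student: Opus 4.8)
The plan is to reduce both assertions to the conjugation formula \eqref{conj}, which gives $\Phi_t(\exp b)=\exp\!\big(e^{t\ad_{a_0}}b\big)=\exp\!\big(\Ad(g_t)b\big)$ for every $b\in\fg$, and then read off the limiting behaviour from the definitions of $\fh$, $\fn$, $\fh^-$ together with the local bi-Lipschitz behaviour of $\exp\colon\fg\to G$ near the origin.

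\textbf{The two limits.} Given $h\in H$, write $h=\exp b$ with $b\in\fh$ (recall $\exp\colon\fh\to H$ is a diffeomorphism, and the analogous statement holds for $\exp\colon\fh^-\to H^-$ since $H^-$ is the expanding subgroup for $-a_0$). By \eqref{conj}, $\Phi_t(h)=\exp\!\big(\Ad(g_t)b\big)$, and by the very definition of $\fh$ we have $\Ad(g_t)b\to 0$ as $t\to-\infty$; continuity of $\exp$ then yields $\Phi_t(h)\to\exp(0)=e_G$. Writing $h^-=\exp b^-$ with $b^-\in\fh^-$ and repeating the argument with $t\to+\infty$ gives the second limit.

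\textbf{The estimate on $Z$.} The one point that requires the $\Co$-diagonalizability hypothesis in an essential way is the uniform operator bound
\[
C_0:=\sup_{t\in\R}\big\|e^{t\ad_{a_0}}|_{\fn}\big\|<\infty ,
\]
the norm being the operator norm for the fixed inner product on $\fn$. Indeed, since $\ad_{a_0}$ is diagonalizable over $\Co$ and, by construction, $\fn\otimes_\R\Co$ is the sum of the eigenspaces of $\ad_{a_0}\otimes 1$ with purely imaginary eigenvalue, the operator $\ad_{a_0}|_{\fn\otimes\Co}$ is diagonalizable with purely imaginary spectrum; hence $\{e^{t\ad_{a_0}}|_{\fn\otimes\Co}:t\in\R\}$ has compact closure (a torus) in $\GL(\fn\otimes\Co)$, and $C_0<\infty$ follows. (Without diagonalizability, Jordan blocks would force polynomial, hence unbounded, growth.) Now use that $\exp\colon\fn\to Z$ is locally bi-Lipschitz at $0$: choose $\lambda\ge 1$ and $r_1>0$ with $\lambda^{-1}\|b\|\le\dist_Z(\exp b,e_G)\le\lambda\|b\|$ whenever $\|b\|<r_1$, and then pick $r_0\in(0,r_1]$ small enough that $B_Z(r_0)\subseteq\exp\!\big(B_\fn(r_1)\big)$ and $C_0\lambda r_0<r_1$.

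\textbf{Conclusion.} For $z\in B_Z(r_0)$ write $z=\exp b$ with $b\in B_\fn(r_1)$; then $\|b\|\le\lambda\,\dist_Z(z,e_G)\le\lambda r_0$, so $\|e^{t\ad_{a_0}}b\|\le C_0\|b\|<r_1$ for every $t\in\R$, keeping us inside the region where the bi-Lipschitz estimate applies. Therefore
\[
\dist_Z\big(\Phi_t(z),e_G\big)=\dist_Z\big(\exp(e^{t\ad_{a_0}}b),e_G\big)\le\lambda\,\big\|e^{t\ad_{a_0}}b\big\|\le\lambda C_0\|b\|\le\lambda^2 C_0\,\dist_Z(z,e_G),
\]
so $C:=\lambda^2 C_0$ works. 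The only genuinely substantive step is the passage from the pointwise boundedness defining $\fn$ to the uniform operator bound $C_0<\infty$; the rest is bookkeeping, mainly the choice of $r_0$ ensuring that the conjugates $e^{t\ad_{a_0}}b$ stay in the domain of validity of the local bi-Lipschitz estimate for $\exp$.
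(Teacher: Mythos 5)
Your proof is correct and follows essentially the same route the paper indicates: the conjugation identity \eqref{conj} combined with the defining properties of $\fh$, $\fh^-$, $\fn$ (with the purely imaginary spectrum of $\ad_{a_0}|_{\fn}$ giving the uniform bound $C_0$) and the locally bi-Lipschitz behaviour of $\exp$ near the identity. Nothing further is needed.
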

    \ignore{\begin{proof}
    Suppose we have $r>0$ and $\theta \in \R$ for which $\lambda = re^{i\theta}$ and $\bar{\lambda} = re^{-i\theta}$ are two distinct eigenvalues for $\ad_{a_0}\otimes 1$ with $\Re \lambda >0$.
    Suppose we have $v, w \in \fg$ such that $v\otimes 1 + w \otimes i \in V_\lambda$ is an eigenvector. Then $v\otimes1 - w \otimes i \in V_{\bar{\lambda}}$ is also an eigenvector. 
    We see that
    \begin{equation*}
        v\otimes 1, w \otimes 1 \in \left(V_{\lambda} \oplus V_{\bar{\lambda}}\right) \cap \fg
    \end{equation*}
    forms an $\R$-independent set and that the $\R$-linear mapping $\varphi :\Co  \to \left(V_{\lambda} \oplus V_{\bar{\lambda}}\right )\cap \fg$ given by
    \begin{equation*}
        1 \mapsto v \otimes 1,\ i \mapsto w \otimes 1
    \end{equation*}
    makes the following diagram commute
    \begin{equation}
    \begin{tikzcd}
    \Co \arrow[r, "\varphi"] \arrow[d, "{re^{-i\theta}}"] & \left(V_\lambda \oplus V_{\bar{\lambda}}\right) \cap \fg \arrow[d, "\ad_{a_0}"]   \\
    \Co \arrow[r, "\varphi"] &  \left(V_{\lambda} \oplus V_{\bar{\lambda}}\right) \cap \fg
    \end{tikzcd}
    \end{equation}
    Thus, if $b = \varphi(\zeta)$ lies in the image of this map, we see that 
    \begin{equation}\label{eq: complex embedding computation}
    \begin{split}
        g_t \exp(b) g_{-t} &= \exp(e^{t\ad_{a_0}}b) \\
        &= \exp(e^{t\ad_{a_0}}\varphi(\zeta)) \\
        &= \exp\circ\varphi \left(e^{tre^{-i\theta}}\zeta\right) \\
        &= \exp(e^{tr\cos \theta}\varphi(e^{-itr\sin \theta}\zeta)).
    \end{split}
    \end{equation}
    Since {$\Re \lambda = r\cos \theta >0$}, it follows that
    \begin{equation*}
        \lim_{t \to - \infty} \varphi(e^{tre^{-i\theta}}\zeta) = 0.
    \end{equation*}
    In general vectors in $\fh$ can be written as the sum of {eigenvectors} and vectors coming from embeddings $\varphi$ as above.  Thus, linearity shows that for every $b \in \fh$, we have
    \begin{equation*}
        \lim_{t\to -\infty} g_t \exp(b) g_{-t} = e_G.
    \end{equation*}
    Since $\exp: \fh \to H$ is a diffeomorphism, we are done. 
    An entirely analogous argument shows that $\lim_{t \to \infty} g_th^-g_{-t} = e_G$ for every $h^- \in H^-$. 
    {For the last part of the lemma we consider a neighborhood of the identity in $Z$ which is a diffeomorphic image of a neighborhood of $0$ in $\fn$ and for which the exponential map is bi-Lipschitz. Then one computes as in \eqref{eq: complex embedding computation} while noting that $r\cos \theta = 0$ for each of the maps $\varphi$}.
    \end{proof}}

       The theorems in the previous section will be reduced to the following.
    \begin{thm}\label{thm: theorem on H}
        Let $X = G/\Gamma$ and $F = \{g_t \in G: t \in \R\}$ be as in Theorem \ref{thm: main theorem}. Let $\rho >0$, $x_0 \in X$, and let $B$ be a countable set with ${FB} \subset X$ bounded; in particular, $B \subset E(F,\infty)$. Then 
        {the set of $h \in H$ with the properties
        \begin{itemize}
            \item $hx_0 \in E(F_+, \infty)$
            \item for every $y \in B$, $hx_0 \in A\big(F_+, B_Z(\rho )y\big)$ 
        \end{itemize}
        is thick in 
        $H$}.
    \end{thm}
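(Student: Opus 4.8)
\section*{Proof proposal for Theorem~\ref{thm: theorem on H}}

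The plan is to construct, inside an \emph{arbitrary} nonempty open set $W\subset H$, a Cantor‑type subset $\mathcal C\subset W$ every point $h$ of which has the two listed properties, and with $\dim\mathcal C=\dim H$; since $W$ is arbitrary this is exactly thickness. We realise $\mathcal C$ as $\bigcap_{n\ge0}\bigl(\bigcup_{B'\in\mathcal B_n}B'\bigr)$, where $\mathcal B_0\succ\mathcal B_1\succ\cdots$ is a nested sequence of finite families of pairwise disjoint closed balls in $H$ of radii $r_n\downarrow0$, refined step by step, and alongside them we choose anchor times $0=T_0<T_1<T_2<\cdots\to\infty$. Enumerating $B=\{y_1,y_2,\dots\}$ (the finite and empty cases being easier) and fixing a map $j\colon\N\to\N$ attaining every value infinitely often, we single out a sparse set of \emph{return stages}: at the $k$‑th return stage carrying the label $j$ we will force $g_{T_n}B'x_0$ to lie within $\delta_k$ of $\overline{B_Z(\sigma_0/2)\,y_j}$ for every $B'\in\mathcal B_n$, with $\delta_k\downarrow0$; at every \emph{free stage} we will only require $g_{T_n}B'x_0\subset K_0$ for a fixed large compact $K_0\subset X$. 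In all cases we demand that $g_{[0,T_n]}B'x_0$ be contained in one fixed compact $K^\ast\subset X$ (independent of $n$) and that $g_{T_n}B'x_0$ have small diameter. Granting such a construction: for $h\in\mathcal C$ one has $F_+hx_0\subset K^\ast$, so $hx_0\in E(F_+,\infty)$; and for each $j$, reading off the return times labelled $j$ gives a subsequence $g_{T_n}hx_0$ converging (by compactness of $\overline{B_Z(\sigma_0/2)}\,y_j$) to a point of $\overline{B_Z(\sigma_0/2)}\,y_j\subset B_Z(\sigma_0)y_j$, so $hx_0\in A\bigl(F_+,B_Z(\sigma_0)y_j\bigr)$; finally a mass distribution argument gives $\dim\mathcal C=\dim H$.

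The construction rests on two engines. First, \emph{equidistribution of expanding $H$‑translates}: for any fixed local $H$‑piece $\{h x:h\in B'\}$, the normalised pushforward of $m_H|_{B'}$ under $h\mapsto g_Thx$ converges weakly to $m_X$ as $T\to\infty$; this is the standard consequence of mixing of the $F$‑action obtained by thickening in the complementary directions $\tilde H=ZH^-$ and using $m_G=m_H\otimes m_{\tilde H}$ (Definition~\ref{defn: expanding, contracting and neutral groups}) together with $g_t$‑invariance of $H$. Consequently, given a target open $R\subset X$ with $m_X(R)>0$, for $T$ large the set $\{h\in B':g_Thx\in R\}$ has measure $\ge\frac12 m_X(R)\,m_H(B')$; pulling back a maximal $r$‑separated subset of it through the (Lipschitz‑distorted) expansion $\Phi_{-T}$ produces $\gtrsim c(R)\,(r_{\rm cur}/r)^{\dim H}$ disjoint children on which, choosing $r$ small relative to $T$, $g_T(\cdot)x$ also has diameter below any prescribed threshold. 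Second, \emph{quantitative non‑divergence of expanding $H$‑translates} in the form underlying the known thickness of $E(F,\infty)$ (Kleinbock--Margulis, \cite[Theorem 1.1]{KM}): there is a large compact $K\subset X$ so that, for pieces based in a fixed compactum and \emph{uniformly in $T\ge0$}, all but an $\varepsilon_0$‑fraction of $B'$ has its $g_{[0,T]}$‑orbit inside $K$. At a free stage one uses the first engine to steer $g_{T_n}(\cdot)x_0$ into the fixed‑size target $K_0$ and the second to keep the orbit in $K$ throughout $[T_{n-1},T_n]$, losing only a fixed fraction of $B'$ and retaining $\gtrsim c\,(r_{n-1}/r_n)^{\dim H}$ children. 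At a return stage labelled $j$ one steers near $y_j$ and then exploits that $FB$ is bounded: since $Fy_j\subset K_B$ is a two‑sided bounded orbit and, by Lemma~\ref{lem: contracting expanding explicit computation}, $\Phi_s$ contracts $H^-$ to $e_G$ while staying bounded on $Z$, the orbit $g_{T_n+\cdot}\,hx_0$ shadows $F y_j$ over a long window, so non‑escape during the return block is automatic (no non‑divergence estimate is needed there), which is why return blocks can be made ever more precise without enlarging $K^\ast$.

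Once the families $\mathcal B_n$ and times $T_n$ are produced with (i) $g_{[0,T_n]}B'x_0\subset K^\ast$, (ii) the scheduled location of $g_{T_n}B'x_0$, and (iii) at least $N_n:=\lceil c_n(r_{n-1}/r_n)^{\dim H}\rceil$ children in each parent, the set $\mathcal C$ is a nonempty compact Cantor set, uncountable, and the natural measure putting mass $\prod_{k\le n}N_k^{-1}$ on each stage‑$n$ ball satisfies the mass distribution hypothesis. One then gets $\dim\mathcal C\ge\liminf_n\frac{\sum_{k\le n}\log N_k}{-\log r_{n+1}}=\dim H+\liminf_n\frac{\sum_{k\le n}\log c_k}{-\log r_{n+1}}$, which equals $\dim H$ provided $r_n\downarrow0$ fast enough (e.g. super‑exponentially) that the accumulated multiplicity deficits $\sum_{k\le n}|\log c_k|$ — bounded per free stage, of size $\asymp\dim X\cdot\log(1/\delta_k)$ per return stage but incurred only along the sparse return schedule — are $o(-\log r_{n+1})$. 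Carrying the whole construction out inside the given open $W$ (choosing $\mathcal B_0=\{$one small ball in $W\}$) yields a full‑dimensional Cantor set inside $W\cap\{h:\text{both properties}\}$, which is thickness.

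The main obstacle is the very point where the two engines meet: equidistribution, hence the ability to steer and to generate many children, requires the anchor gaps $T_n-T_{n-1}$ to be long, whereas a long uncontrolled interval is exactly where the forward orbit can run off to infinity, and the non‑divergence estimate only saves a \emph{fixed} fraction of $B'$ for a \emph{fixed} compact $K$ — so it cannot by itself be combined with a shrinking target. The resolution is the careful interleaving sketched above: free stages (fixed‑size targets, non‑divergence applies) are frequent and carry the dimension, while the increasingly precise returns to each $y_j$ are placed at a sparse sequence of stages and are entered only after the orbit has been brought near $Fy_j$, after which escape‑control is inherited from shadowing the bounded orbit $Fy_j$; the neutral directions, of fixed size $\sigma_0$, are absorbed into the plaque $B_Z(\sigma_0)y_j$ because the flow is bounded along $Z$. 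Getting the approach all the way onto $\overline{B_Z(\sigma_0/2)}\,y_j$ (not merely a fixed neighbourhood of it) while keeping $g_{[0,T_n]}B'x_0\subset K^\ast$ for a single compact $K^\ast$ is the delicate bookkeeping, and is the technical heart of the argument.
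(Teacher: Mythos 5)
Your overall architecture (nested Cantor/tree-like families in $H$, frequent ``free'' stages plus a sparse schedule of ever more precise returns to each $y_j$, shadowing of the bounded orbit $Fy_j$, and a mass-distribution dimension count) is the same as the paper's, but there is a genuine gap in your ``second engine.'' The statement you attribute to Kleinbock--Margulis --- a fixed compact $K$ such that, for balls $B'\subset H$ based at points of a fixed compactum and \emph{uniformly in $T\ge 0$}, all but an $\varepsilon_0$-fraction of $h\in B'$ satisfies $g_{[0,T]}hx\subset K$ --- is false. Indeed $m_H\{h\in B': g_s hx\in K\ \forall s\in[0,T]\}$ decreases, as $T\to\infty$, to the measure of $\{h\in B': F_+hx\subset K\}$, and that measure is zero: by the very equidistribution you use as your first engine (Proposition \ref{prop: KM equidistribution generic}), near any density point of this set a definite fraction of $h$ is sent by $g_t$, for large $t$, into a fixed positive-measure compact subset of $X\smallsetminus K$. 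What underlies \cite[Theorem 1.1]{KM} is not such a uniform measure estimate but precisely a step-by-step construction with a \emph{fixed} time increment $t$: at each stage a definite fraction of tessellation pieces lands in $K$ at the single time $t$, and boundedness during the increment is automatic because $g_{[0,t]}$ of a compact set is compact; the surviving limit set has measure zero but nearly full dimension. Since your scheme insists on long (indeed growing) anchor gaps at free stages (you need $r_n\downarrow 0$ super-exponentially), the escape control during those gaps has no valid source, and without it the limit points need not lie in $E(F_+,\infty)$.

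The same problem reappears, unaddressed, at your return stages: you obtain the precision $\delta_k$ by equidistributing onto a $\delta_k$-neighbourhood of the plaque $\overline{B_Z(\sigma_0/2)}\,y_j$ at time $T_n$, which forces the gap $T_n-T_{n-1}$ to exceed the (growing) equidistribution threshold for that shrinking target; the shadowing of $Fy_j$ that you invoke only begins \emph{after} time $T_n$, so it does not control the orbit during that long gap. The paper's Proposition \ref{prop: approach} avoids both difficulties at once: one lands, at the \emph{fixed} time $t$, in a ball of \emph{fixed} radius around $g_{-s_1}y$, and the precision $\varepsilon$ is then produced by the dynamics --- flowing forward contracts the $H^-$-error (Lemma \ref{lem: contracting expanding explicit computation}), the $H$-size of the chosen child is pre-contracted by taking it of the form $\Phi_{-s_\varepsilon}(V\gamma)$, and the $Z$-error is never shrunk, only kept of size $\sigma$ (which is exactly why the theorem asserts approach to $B_Z(\sigma_0)y$ rather than to $y$) --- so that the entire long window $[t,s_\varepsilon]$ is spent shadowing $Fy\subset K$, and no non-divergence input is needed anywhere. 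Replacing your two engines by this mechanism (fixed-length generic steps, and approach steps whose only equidistribution input is at the fixed time $t$) repairs the argument and brings it back to essentially the paper's proof.
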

    We now give the
    \begin{proof}[Proof of Theorem \ref{thm: main theorem} from Theorem \ref{thm: theorem on H}]
        Let $U \subset X$ be open, and let $y \in E(F,\infty)$. Fix $x_0 \in U$, and let $r >0$ be such that the map 
        \begin{equation*}
            \varphi: B_Z(r) \times B_{H^-}(r) \times B_{H}(r) \to U;\ \varphi(z,h^-,h) = zh^-h x_0
        \end{equation*}
        is a diffeomorphism onto an open subset of $U$. 
        It follows from the definition of $\fn$ that there exists constants $\rho_1, C >0$ such that for any $\rho < \rho_1$ and any unbounded sequence $(t_n)_{n \in \N} \subset \R$ we have a subsequence $(t_{n_k})_{k \in \N}$ such that the sequence of maps $\Phi_{t_{k_n}}|_{B_Z(C\rho)}$ converges pointwise to some smooth immersion
        \begin{equation}\label{eq: look at me}
        {\begin{aligned}
        \Phi : B_Z(C\rho) \to Z
             \end{aligned}}
        \end{equation}
        whose image contains the ball $B_Z(\rho)$.
        Apply Theorem \ref{thm: theorem on H} with $\rho < \rho_1$. Further assume that $C\rho < r$.
         \ignore{Take $r_0,C$  as in Lemma \ref{lem: contracting expanding explicit computation}. By shrinking  $r$ if needed we can assume that $r < r_0$ and that $r/C < \rho_2$. 
         Apply Theorem \ref{thm: theorem on H} with $\rho = r/C$ 
        and 
        $B = \{y\}$.}
        This gives us a set 
        \begin{equation*}
          S := \left\{h\in B_H(r):   hx_0 \in E(F_+, \infty) \cap A\big(F_+,B_Z(\rho)y\big)\right\}
        \end{equation*}
        with $\dim S = \dim H$.
      Then 
        for any $s \in \R$, 
        {$z \in Z$ and $h^- \in H^-$} compute
    \begin{equation}\label{eq: bounded invariance}
        \begin{split}
            g_t g_sz h^- hx_0 &= g_s{\Phi_t(z)}\Phi_t(h^-)g_th x_0.
        \end{split}
        \end{equation}
        Lemma \ref{lem: contracting expanding explicit computation} thus shows that $g_szh^- S x_0 \subset E(F_+, \infty)$.
        Further, let $h \in S$, and {suppose that} $hx_0 \in A(F_+,zy)$ with $z \in B_Z({\rho})$. 
        This gives us an unbounded sequence {$(t_n)_{n \in \N} \subset \R_{\geq 0}$ with
        \begin{equation*}
            \lim_{n \to \infty} g_{t_n}h x_0 = z y.
        \end{equation*}
    Choose a subsequence  $(t_{n_k})$ and map $\Phi$ as in \eqref{eq: look at me}, and let $w = \Phi^{-1}(z^{-1}) \in B_Z(C\rho)$.
        We then see that for any $h^- \in H^-$ and $s \in \R$
    \begin{equation}\label{eq: approach invariance}
            \lim_{k \to \infty }g_{t_{n_k}-s} \big(g_s  w h^- h\big) x_0 = \lim_{k \to \infty}  \Phi_{t_{n_k}}(w) \Phi_{t_k}(h^-) g_{t_{n_k}}hx_0 %
           =  z^{-1}zy
            =  y.
        \end{equation}}
        {The 
         computations  in }\eqref{eq: bounded invariance} and \eqref{eq: approach invariance} show the following. If $h \in S$, then there exists some $w \in B_Z(r)$ such that for all $s \in \R$ with $g_s w \in B_Z(r)$ and all $h^- \in B_{H^-}(r)$ we have
        \begin{equation*}
            g_swh^-hx_0 \in E(F_+, \infty) \cap A(F_+, y).
        \end{equation*}
        Thus, if we define 
        \begin{equation*}
            T := \left\lbrace (z,h^-,h)\in B_Z(r) \times B_{H^-}(r) \times B_{H}(r): zh^-h x_0 \in E(F_+,\infty) \cap A(F_+,y)\right\rbrace
        \end{equation*}
         and 
         let $\pi: B_Z(r) \times B_{H^-}(r) \times B_{H}(r) \to B_{H}(r)$   be the projection onto the third factor, then in the commutative diagram
         \begin{equation*}
    \begin{tikzcd}
    \pi^{-1}(S) \cap T \arrow[r] \arrow[d, "\pi"] & B_{Z}(r) \times B_{H^-}(r) \times B_{H}(r) \arrow[d, "\pi"]   \\
    S\arrow[r]  & B_{H}(r)
\end{tikzcd}
\end{equation*}
we have 
\begin{equation*}
    \dim\big(\pi^{-1}(h) \cap T\big) \geq 1 + \dim H^- \text{ for all } h \in S.
\end{equation*}
We now record the following 
dimension estimate.
\begin{lem}[Lemma 1.2.1 in \cite{Kl2}]\label{lem: Eilenberg}
  Let $M_1, M_2$ be Riemannian manifolds, and let $\pi: M_1\times M_2 \to M_2$ be 
 the natural projection.  Let $E \subset M_1\times M_2$. Then 
    \begin{equation*}
        \dim E \geq \dim \pi(E) + \inf_{p \in \pi(E)} \dim \left(\pi^{-1}(p) \cap E \right).
    \end{equation*}
\ignore{    Let $M_1, M_2$ be metric spaces and let $\pi: M_1 \to M_2$ be a Lipschitz map between them. 
    Let $E \subset M_1$. Then we have the dimension estimate:
    \begin{equation*}
        \dim E \geq \dim \pi(E) + \inf_{p \in \pi(E)} \dim \left(\pi^{-1}(p) \cap E \right).
    \end{equation*}}
\end{lem}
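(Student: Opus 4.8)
The plan is to deduce the estimate from Eilenberg's inequality, a classical coarea-type bound for Lipschitz maps; this is the reason for the label attached to the lemma. Equip $M_1\times M_2$ with the product Riemannian metric, so that $\dist_{M_2}\big(\pi(p),\pi(q)\big)\le\dist_{M_1\times M_2}(p,q)$ and $\pi$ is $1$-Lipschitz. Eilenberg's inequality then provides, for each $s,t\ge 0$, a finite constant $c=c(s,t)$ with
\begin{equation*}
    \int^*_{M_2}\mathcal{H}^s\big(E\cap\pi^{-1}(y)\big)\,d\mathcal{H}^t(y)\ \le\ c\,\mathcal{H}^{s+t}(E),
\end{equation*}
where $\int^*$ denotes the upper integral and $\mathcal{H}^r$ denotes Hausdorff measure in whichever of $M_1\times M_2$, $M_2$, $\pi^{-1}(y)$ is relevant, the fiber $\pi^{-1}(y)=M_1\times\{y\}$ carrying the metric of $M_1$.

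Next recall the elementary fact that $\mathcal{H}^r(A)=\infty$ whenever $r<\dim A$. Hence it suffices to show: for every $s<\inf_{p\in\pi(E)}\dim\big(\pi^{-1}(p)\cap E\big)$ and every $t<\dim\pi(E)$, one has $\mathcal{H}^{s+t}(E)=\infty$; taking the supremum over all such $s,t$ then yields the lemma. Fix such $s,t$. For $y\in\pi(E)$ we have $s<\dim\big(\pi^{-1}(y)\cap E\big)$, hence $\mathcal{H}^s\big(E\cap\pi^{-1}(y)\big)=\infty$, while for $y\notin\pi(E)$ the slice $E\cap\pi^{-1}(y)$ is empty and the integrand vanishes. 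Thus the integrand equals $+\infty$ precisely on $\pi(E)$, which satisfies $\mathcal{H}^t\big(\pi(E)\big)=\infty>0$ because $t<\dim\pi(E)$; monotonicity of the upper integral forces the left-hand side above to be $+\infty$, and since $c<\infty$ we obtain $\mathcal{H}^{s+t}(E)=\infty$. The degenerate cases are handled identically: if $\inf_{p\in\pi(E)}\dim(\pi^{-1}(p)\cap E)=0$, take $s=0$ and use that $\mathcal{H}^0$ is counting measure, so that $\mathcal{H}^0\big(E\cap\pi^{-1}(y)\big)\ge1$ for $y\in\pi(E)$; the case $\dim\pi(E)=0$ is symmetric, with $t=0$.

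The only point calling for a word of care is the Lipschitz property of $\pi$ together with the choice of ambient metric; one can sidestep any discussion of the product Riemannian structure by covering $M_1\times M_2$ with countably many coordinate charts on which each metric is bi-Lipschitz to a Euclidean one, applying the Euclidean form of Eilenberg's inequality chart by chart, and combining the pieces via the countable stability of Hausdorff dimension and the relation $\pi(E)=\bigcup_n\pi(E\cap U_n)$. Beyond this the argument is entirely soft, all of its content being packaged inside the coarea inequality, so I do not expect a genuine obstacle.
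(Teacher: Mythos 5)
Your proof is correct and takes essentially the same route as the paper: the paper also reduces, via a countable covering and bi-Lipschitz equivalence, to the Euclidean coarea-type inequality of Marstrand/Eilenberg (citing Marstrand, Falconer, and Esmayli--Haj{\l}asz), and its own (suppressed) computation chooses exponents $t<\dim\pi(E)$ and $s$ below the fiber dimension exactly as you do, forcing $\mathcal{H}^{s+t}(E)=\infty$.
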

\begin{proof}
The use of a covering and a bi-Lipschitz equivalence reduces the statement to  $M_1$ and  $M_2$ being open subsets in
Euclidean spaces, in which case the estimate is given by a theorem of Marstrand, see \cite[Theorem  1]{Ma} or \cite[Theorem
5.8]{F}\footnote{In the aforementioned references the theorems is stated and proved for the case $\dim M_1 = \dim M_2 = 1$, but the same proof works for arbitrary dimensions.}. See also    \cite[Theorem 1.1]{EH} for a modern treatment.
\ignore{(see also \cite[Theorem 7.7]{Ma}). This Theorem states that, for any $0 \leq t \leq s< \infty$, there exists a constant $C>0$ such that
    \begin{equation}\label{eq: upper integral}
        \int_{M_2}^* \ca{H}^{s-t}(E \cap \pi^{-1}(h)) d\ca{H}^t(h) \leq C \ca{H}^s(E).
    \end{equation}
  Here $\ca{H}^*$ denotes the corresponding Hausdorff measure and the integral on the left is the upper integral. Choosing
    \begin{equation*}
       t < \dim \pi(E)\ \text{ and }\ s = t + \inf_{p \in \pi(E)} \dim (\pi^{-1}(p) \cap E)
    \end{equation*}
    shows that the right hand side of \eqref{eq: upper integral} is infinite and gives the result.}
\end{proof}

Now let us apply the above lemma with $M_2 = B_H(r)$, $M_1 = 
B_Z(r) \times B_{H^-}(r)$ and 
$E =  \pi^{-1}(S) \cap T$.
Note that $\pi(E) = S$. We thus obtain
\begin{equation*}
\begin{split}
    \dim T &\geq \dim S + 1 + \dim H^-  \\
    &= \dim H + 1 + \dim H^- 
    = \dim X - \dim Z + 1.
    \end{split}
\end{equation*}
Since $T$ is mapped to $U$ by $\varphi$, we have 
\begin{equation*}
\dim \big( U \cap A(F_+,y) \cap E(F_+,\infty) \big) \geq \dim X - \dim Z + 1,
\end{equation*}
which completes the proof.
    \end{proof}

We also have
\begin{proof}[Proof of Theorem \ref{thm: dense-orbits} from Theorem \ref{thm: theorem on H}]
    By the hypothesis we have a compact, $F$-invariant  subset $B \subset E(F,\infty)$. We are to construct points $x$ with $B \subset 
    \overline{F_+x}$. 
    If $B'\subset B$ is countable with $ \overline{B'} = B$, then clearly it suffices to construct points $x$ with $B' \subset  \overline{F_+x}$.
    Thus, from the outset we assume that $B \subset E(F,\infty)$ is countable with $FB \subset X$ bounded. 

    Recall also that in this case we have 
   {$
    \fn = \R a_0$ and $Z = F$}.
    For any {non-empty open}  $U \subset X$ we can choose a base point $x_0 \in U$ and apply Theorem \ref{thm: theorem on H} with {an arbitrary} $\rho >0$ 
    and with $B \subset E(F,\infty)$ countable as above. This gives us a thick set of 
    $h \in H$ such that, for every $y \in B$,
    \begin{equation*}
        hx_0 \in E(F_+, \infty) \cap   A\big(F_+, B_Z(\rho)y\big) = E(F_+, \infty) \cap A(F_+, y).
    \end{equation*}
    The same computations {as} in the previous proof show that 
    {$E(F_+, \infty) \cap A(F_+, y)$} is stable under multiplication by $zh^-$
    {for arbitrary} $z \in Z$ and $h^- \in H^-$.
   {Thus} Lemma \ref{lem: Eilenberg} 
    allows us to conclude that
    \begin{equation*}
        \dim \big(U \cap E(F_+, \infty) \cap   A(F_+,y)\big) \geq \dim H + 1 + \dim H^-  = \dim X,
    \end{equation*}
    which finishes the proof.
\end{proof}
We can now focus on proving Theorem \ref{thm: theorem on H}. 
 \section{Equidistribution and tree-like sets on the expanding group}

In what follows we 
fix the decomposition \eqref{eq: real decomposition} of the Lie algebra of $G$ and will work with the subgroups $H,H^-,Z$ as in Definition \ref{defn: expanding, contracting and neutral groups}. A standard consequence of mixing of the $F$-action on $X$, dating back to the Ph.D.\ Thesis of Margulis \cite{M}, is 
the equidistribution of $g_t$-translates of $H$-orbits as $t\to\infty$. Let us quote a special case (one-sided inequality) stated in  \cite{Kl}.

\begin{prop}[Proposition 2.4 in \cite{Kl}]\label{prop: KM equidistribution generic}
Let $V \subset H$ be a bounded measurable set. Let $Q \subset X$ be a compact set with $m_X(\partial Q) =0$, let $L \subset X$ be another compact set, and let $\eta >0$. Then there exists $t_0 := t_0(V, Q, L, \eta) >0$ such that, for all $t > t_0$ and any base point $x \in L$, we have
\begin{equation*}
    m_H\left(\{ h \in V : g_thx \in Q\}\right) > m_H(V) m_X(Q) - \eta.
\end{equation*}
\end{prop}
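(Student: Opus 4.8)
\noindent\textbf{Proof strategy for Proposition~\ref{prop: KM equidistribution generic}.}
The plan is to run Margulis's thickening argument: thicken the $H$-orbit piece $Vx$ transversally to a genuine box in $X$, push it forward by $g_t$, recognise the resulting quantity as a correlation $\int_X\mathbf 1_{\text{box}}(g_{-t}y)\,\phi(y)\,dm_X(y)$, and finish with mixing. First some harmless reductions. We may assume $m_H(V)>0$, the inequality being trivial otherwise. Since $m_X(\partial Q)=0$ we may fix $\phi\in C_c(X)$ with $0\le\phi\le\mathbf 1_Q$, with $\phi$ Lipschitz, and with $\int_X\phi\,dm_X>m_X(Q)-\tfrac{\eta}{2m_H(V)}$; it then suffices to prove that $\int_V\phi(g_thx)\,dm_H(h)>m_H(V)\int_X\phi\,dm_X-\tfrac{\eta}{2}$ for all large $t$, uniformly in $x\in L$, since $\mathbf 1_Q\ge\phi$ chains the two inequalities into the Proposition. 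Finally, by partitioning $V$ into finitely many pieces each contained in a small ball of $H$ and summing, we may assume $V$ itself lies in a small ball.

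Fix a small $\delta>0$ and let $O_\delta\subset\tilde H$ be the $\delta$-ball about $e$. For $V$ and $\delta$ small (uniformly for $x\in L$, since the injectivity radius is bounded below on the compact $L$), the map $(\tilde h,h)\mapsto\tilde h h x$ embeds $O_\delta\times V$ onto a box $\mathcal W_x:=O_\delta Vx\subset X$, and by the product structure $m_G=m_H\otimes m_{\tilde H}$ of Definition~\ref{horosph} we get $m_X(\mathcal W_x)=m_H(V)\,m_{\tilde H}(O_\delta)$. Substituting $y\mapsto g_ty$ (using $g_t$-invariance of $m_X$) and unfolding over the box,
\[
\int_X\mathbf 1_{\mathcal W_x}(g_{-t}y)\,\phi(y)\,dm_X(y)=\int_{O_\delta}\!\!\int_V\phi\big(g_t\tilde h h x\big)\,dm_H(h)\,dm_{\tilde H}(\tilde h).
\]
Now $g_t\tilde h h x=\Phi_t(\tilde h)\,(g_thx)$, and Lemma~\ref{lem: contracting expanding explicit computation} together with the $\ad$-diagonalizability of $a_0$ shows that $\Phi_t(O_\delta)$ stays inside a ball of radius $\lesssim\delta$ for all $t\ge0$ (the $H^-$-part contracts, the $Z$-part is controlled by the constant $C$). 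Hence $\dist_X(g_t\tilde h h x,g_thx)\lesssim\delta$, and $\phi$ being Lipschitz the inner integrand equals $\phi(g_thx)+O(\delta)$ uniformly in $t$ and $\tilde h$. Dividing by $m_{\tilde H}(O_\delta)$ yields
\[
\int_V\phi(g_thx)\,dm_H(h)=\frac{1}{m_{\tilde H}(O_\delta)}\int_X\mathbf 1_{\mathcal W_x}(g_{-t}y)\,\phi(y)\,dm_X(y)+O\big(m_H(V)\,\delta\big),
\]
with implied constant depending only on $\phi$.

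It remains to apply mixing to the correlation on the right. Mixing of the $F$-action on $(X,m_X)$ gives $\int_X\psi(g_{-t}y)\phi(y)\,dm_X(y)\to\big(\int_X\psi\,dm_X\big)\big(\int_X\phi\,dm_X\big)$ as $t\to\infty$ for all $\psi,\phi\in L^2(X,m_X)$, and, $\phi$ being fixed, a routine equicontinuity argument makes this convergence uniform over $\psi$ in any norm-compact subset of $L^2(X,m_X)$. Since $x\mapsto\mathbf 1_{\mathcal W_x}=\mathbf 1_{O_\delta Vx}$ is continuous from $L$ into $L^2(X,m_X)$ — nearby base points yield boxes with small symmetric difference — the family $\{\mathbf 1_{\mathcal W_x}:x\in L\}$ is compact there, so $\frac{1}{m_{\tilde H}(O_\delta)}\int_X\mathbf 1_{\mathcal W_x}(g_{-t}y)\phi(y)\,dm_X(y)\to\frac{m_X(\mathcal W_x)}{m_{\tilde H}(O_\delta)}\int_X\phi\,dm_X=m_H(V)\int_X\phi\,dm_X$ uniformly in $x\in L$. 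Choosing $\phi$ as above, then $\delta$ small enough that the $O(m_H(V)\delta)$ term is $<\tfrac\eta4$, then $t_0$ large enough that for $t>t_0$ and all $x\in L$ the mixing error is $<\tfrac\eta4$, we obtain $\int_V\phi(g_thx)\,dm_H(h)>m_H(V)\int_X\phi\,dm_X-\tfrac\eta2$, hence the claim.

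The step I expect to be the main obstacle is the uniformity over the compact set $L$ of base points: the box $\mathcal W_x$ moves with $x$, so one cannot simply invoke mixing for a single fixed pair of functions, and must instead know that the mixing limit is attained uniformly over a norm-compact family of $L^2$ bump functions — a genuine, if routine, strengthening of the bare definition, obtained by a compactness/equicontinuity argument. The remaining work is bookkeeping: arranging the subdivision of $V$ and the radius $\delta$ so that each small box $O_\delta V'x$ is honestly embedded with $m_X$-measure $m_H(V')\,m_{\tilde H}(O_\delta)$, and using Lemma~\ref{lem: contracting expanding explicit computation} to control the transverse displacement $\Phi_t(\tilde h)$ uniformly for every $t\ge0$ rather than merely as $t\to\infty$.
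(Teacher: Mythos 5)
Your proof is correct, and it is the standard Margulis thickening-plus-mixing argument: the paper itself only quotes this proposition from \cite{Kl} without proof, but your route (thicken $Vx$ by a small ball in $\tilde{H}$, use that $\Phi_t$ does not expand $\tilde{H}$ for $t\ge 0$, unfold via $m_G = m_H\otimes m_{\tilde H}$, and apply mixing uniformly over the norm-compact family $\{\mathbf 1_{\mathcal W_x}: x\in L\}$) coincides with the proof the paper gives for its uniform variant, Proposition \ref{prop: equidistirbution for small balls}, which relies on Proposition \ref{thm: KM mixing} in exactly the way you re-derive by hand. The uniformity over $x\in L$ that you flag as the main obstacle is handled there in the same manner, so there is no gap.
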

The above equidistirbution theorem was used in \cite{KM} to prove the existence of bounded 
$F_+$-trajectories on $X = \ggm$. We are going to use the same principle to construct trajectories visiting small neighborhoods of points of $X$. For $x\in X$ denote by $\pi_x$ the map  $G\to X$ given by $\pi_x(g) := gx$, and by  $\rho(x)$ the \textsl{injectivity radius} of $x$: $$
{\rho(x) :=}\,\sup\{r > 0: 
\pi_x\text{ is injective on }B_G(r)\}.$$
If $K$ is a subset of $X$, let us denote by $\rho(K)$ the \textsl{injectivity radius} of $K$: $$
\rho(K) := \inf_{x\in K}\rho(x) = \sup\{r > 0: 
\pi_x\text{ is injective on }B_G(r)\  \ \forall\,x\in K\};$$
it is known that $\rho(K) > 0$ if and only if $K$ is bounded.

\smallskip
 Let us restate the mixing assumption on the $F$–action on 
$X$ as follows.
\begin{prop}[Theorem 2.1.2 in \cite{KM}]\label{thm: KM mixing}
 The $F$-action on 
$X$ is mixing if and only if  for any two compact families of functions $\Xi, \Psi \subset L^2(m_X)$ and  for any $\eta>0$ there exists $t_0 >0$ such that, for all $t > t_0$, $\varphi \in \Xi$ and $\psi \in \Psi$,
    \begin{equation*}
        \left| \int \varphi(g_tx)\psi(x)\, dm_X(x) - \left(\int \varphi \,dm_X\right) \left(\int \psi \,dm_X\right)\right| < \eta.
    \end{equation*}
\end{prop}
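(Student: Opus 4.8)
The plan is to prove this equivalence by a soft functional-analytic argument, viewing the correlation
\[
C_t(\varphi,\psi):=\int \varphi(g_tx)\psi(x)\,dm_X(x)-\Big(\int\varphi\,dm_X\Big)\Big(\int\psi\,dm_X\Big)
\]
as a uniformly bounded bilinear form on $L^2(m_X)\times L^2(m_X)$, and then using compactness of $\Xi$ and $\Psi$ to upgrade the pointwise decay encoded in mixing to decay that is uniform over these families.

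The direction ``$\Leftarrow$'' is immediate: singletons are compact subsets of $L^2(m_X)$, so applying the displayed estimate with $\Xi=\{\varphi\}$, $\Psi=\{\psi\}$ and letting $\eta\to 0$ gives $C_t(\varphi,\psi)\to 0$ as $t\to\infty$ for every $\varphi,\psi\in L^2(m_X)$; restricting to indicator functions of measurable sets recovers the measure-theoretic definition of mixing, namely $m_X(g_{-t}A\cap B)\to m_X(A)m_X(B)$. (Here, as usual, one uses that the Koopman operator $U_t\colon\varphi\mapsto\varphi\circ g_t$ is an $L^2(m_X)$-isometry, because $m_X$ is $g_t$-invariant, together with density of simple functions, to pass freely between the measure-theoretic and the $L^2$ formulations of mixing; this passage is used implicitly in both directions.)

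For ``$\Rightarrow$'', assume the $F$-action is mixing, i.e.\ $C_t(\varphi,\psi)\to 0$ as $t\to\infty$ for each fixed pair $\varphi,\psi\in L^2(m_X)$. First observe that $C_t$ is bilinear, and that since $U_t$ is an isometry and $m_X$ is a probability measure one has the uniform bound $|C_t(\varphi,\psi)|\le 2\|\varphi\|_2\|\psi\|_2$ for all $t$. Given $\eta>0$, choose $M$ with $\|\varphi\|_2\le M$ for all $\varphi\in\Xi$ and $\|\psi\|_2\le M$ for all $\psi\in\Psi$ (compact sets are bounded), and pick finite $\delta$-nets $\{\varphi_i\}_{i=1}^{n}\subset\Xi$ and $\{\psi_j\}_{j=1}^{m}\subset\Psi$ with $\delta:=\eta/(16M)$. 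For arbitrary $\varphi\in\Xi$, $\psi\in\Psi$, choose $i,j$ with $\|\varphi-\varphi_i\|_2<\delta$ and $\|\psi-\psi_j\|_2<\delta$; then bilinearity and the uniform bound give
\[
|C_t(\varphi,\psi)|\le|C_t(\varphi-\varphi_i,\psi)|+|C_t(\varphi_i,\psi-\psi_j)|+|C_t(\varphi_i,\psi_j)|<4\delta M+|C_t(\varphi_i,\psi_j)|=\tfrac{\eta}{4}+|C_t(\varphi_i,\psi_j)|.
\]
Applying the pointwise mixing hypothesis to the finitely many pairs $(\varphi_i,\psi_j)$ yields $t_0>0$ such that $|C_t(\varphi_i,\psi_j)|<\eta/2$ for all $t>t_0$ and all $i,j$; hence $|C_t(\varphi,\psi)|<\eta$ for all $t>t_0$, uniformly over $\varphi\in\Xi$ and $\psi\in\Psi$, which is the claim.

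I do not expect any genuine obstacle: the argument is the standard ``finitely many representatives plus a uniform operator bound'' compactness trick. The only points deserving a line of care are that the net points be chosen inside $\Xi$ and $\Psi$ themselves, so that their $L^2$-norms are controlled by $M$, and the routine (but to be stated) equivalence, via $\|U_t\|=1$ and density of simple functions, between the dynamical definition of mixing and the vanishing of $L^2$-correlations.
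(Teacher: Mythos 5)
Your proof is correct. The paper does not prove this proposition at all --- it is quoted verbatim as Theorem 2.1.2 of the cited reference [KM] --- and your argument (the uniform bound $|C_t(\varphi,\psi)|\le 2\|\varphi\|_2\|\psi\|_2$ via Cauchy--Schwarz and invariance of $m_X$, finite $\delta$-nets inside $\Xi$ and $\Psi$, bilinearity, and the standard passage between the measure-theoretic and $L^2$ formulations of mixing) is exactly the standard compactness upgrade used in that reference, so there is nothing to flag.
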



The next proposition is a uniform version of Proposition \ref{prop: KM equidistribution generic} with $Q$ replaced by a compact family of balls in $X$ (compact in the sense of $L^2(m_X)$).

\begin{prop}\label{prop: equidistirbution for small balls}
    Let $V \subset H$ be bounded measurable with $m_H(\partial V) =0$. Let $K \subset X$ be compact and let $0<\sigma<\rho(K)$. 
    Let $L \subset X$ be another compact set, and let $\eta >0$. Then there exists $t_0 := t_0(V, K, \sigma, L, \eta) >0$ such that, for all $t > t_0$, base point $x \in L$ and center $y \in K$,
    \begin{equation*}
        m_H\left\lbrace h \in V: g_thx \in B_X(y,\sigma) \right\rbrace > m_H(V) m_G\big(B_G(\sigma)\big) - \eta.
    \end{equation*}
\end{prop}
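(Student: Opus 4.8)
The plan is to deduce Proposition \ref{prop: equidistirbution for small balls} from the mixing criterion in Proposition \ref{thm: KM mixing} by exhibiting, for the family of balls $\{B_X(y,\sigma): y\in K\}$, a uniformly bounded and $L^2$-precompact family of test functions to which the mixing estimate applies, and then integrating against an approximation of $m_H|_V$. Concretely, for each $y\in K$ let $\varphi_y := \mathbbm 1_{B_X(y,\sigma)}$; since $\sigma<\rho(K)$, the ball $B_X(y,\sigma)$ is the injective image of $B_G(\sigma)$ under $\pi_y$, so $\int\varphi_y\,dm_X = m_G(B_G(\sigma))$ for every $y\in K$, which is what makes the right-hand side of the claimed inequality $y$-independent.

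First I would check that $\Xi := \{\varphi_y : y\in K\}$ is precompact in $L^2(m_X)$. This follows from uniform continuity: the map $y\mapsto \varphi_y$ is $L^2$-continuous because $\|\varphi_y - \varphi_{y'}\|_2^2 \le m_X\big(B_X(y,\sigma)\,\triangle\,B_X(y',\sigma)\big)$, and for $\dist_X(y,y')$ small this symmetric difference is controlled by the $m_X$-measure of a thin annular shell around $\partial B_X(y,\sigma)$, which tends to $0$; together with compactness of $K$ this gives precompactness of $\Xi$. (A clean way to phrase this: $\varphi_y = \pi_{y*}\big(\mathbbm 1_{B_G(\sigma)}\,m_G\big)$-density-wise locally, and translation is $L^2$-continuous.) For the second family, fix a base point $x_0$ and a small relatively compact neighborhood; approximate $\mathbbm 1_V$ and the "transported" indicator functions by continuous functions. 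Actually the cleanest route is to introduce, for $x$ in a fixed compact set $L$ and for $h$ ranging over the fixed bounded set $V\subset H$, the function on $X$ obtained by pushing forward $m_H|_V$ under $h\mapsto hx$; but since $V$ and $L$ are fixed throughout, one can instead directly write
\begin{equation*}
    m_H\big(\{h\in V : g_thx\in B_X(y,\sigma)\}\big) = \int_V \varphi_y(g_t h x)\,dm_H(h)
\end{equation*}
and cover $V$ by finitely many small pieces so that on each piece $h\mapsto hx$ is bi-Lipschitz onto a small set in $X$ with controlled Jacobian; this reduces matters, piece by piece, to the mixing estimate $\int \varphi_y(g_t w)\,\psi(w)\,dm_X(w)$ with $\psi$ a bounded density coming from $m_H$, and the error incurred by replacing the true orbit-measure by its $m_X$-smeared version is $O(\mathrm{diam})$, uniformly in $x\in L$.

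Then I would run the mixing estimate: apply Proposition \ref{thm: KM mixing} to $\Xi$ (as above) and to $\Psi$ consisting of the finitely many $\psi$'s arising from the pieces of $V$ and the base points $x\in L$ — here one uses compactness of $L$ and boundedness of $V$ to get a genuinely finite, hence compact, family — to obtain a single $t_0 = t_0(V,K,\sigma,L,\eta)$ such that for all $t>t_0$, all $y\in K$ and all $x\in L$,
\begin{equation*}
    \Big|\int_V \varphi_y(g_t h x)\,dm_H(h) - m_H(V)\,m_G\big(B_G(\sigma)\big)\Big| < \eta,
\end{equation*}
where I have already used $\int\varphi_y\,dm_X = m_G(B_G(\sigma))$ and $\int_V dm_H = m_H(V)$. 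Dropping the absolute value gives the one-sided inequality as stated. The hypothesis $m_H(\partial V)=0$ is used so that $\mathbbm 1_V$ is a.e.-continuous and hence approximable in $L^2$ by continuous functions, keeping $\Psi$ genuinely precompact; the hypothesis $\sigma<\rho(K)$ is exactly what guarantees $m_X(B_X(y,\sigma)) = m_G(B_G(\sigma))$ independently of $y$.

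The main obstacle is the \emph{uniformity in the center} $y\in K$ and in the \emph{base point} $x\in L$ simultaneously: a naive application of Proposition \ref{prop: KM equidistribution generic} would give a $t_0$ depending on the individual ball $Q=B_X(y,\sigma)$, and letting $y$ vary over the compact set $K$ could a priori let $t_0\to\infty$. The device that resolves this is precisely the reformulation of mixing in terms of \emph{compact families} of $L^2$-functions (Proposition \ref{thm: KM mixing}): once one verifies that $\{\mathbbm 1_{B_X(y,\sigma)} : y\in K\}$ is $L^2$-precompact, uniformity over $y$ is automatic, and uniformity over $x\in L$ is handled the same way as in the proof of Proposition \ref{prop: KM equidistribution generic}. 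So the real work is the continuity/precompactness verification for the family of ball-indicators, together with bookkeeping of the bi-Lipschitz distortion when comparing $\int_V \varphi_y(g_t hx)\,dm_H(h)$ with a genuine $L^2(m_X)$ correlation integral.
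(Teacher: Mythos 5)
Your overall strategy---recasting the statement as a correlation estimate and invoking the compact-families formulation of mixing (Proposition \ref{thm: KM mixing}), with uniformity in $y\in K$ coming from $L^2$-precompactness of the ball indicators and with $\sigma<\rho(K)$ giving $\int\varphi_y\,dm_X=m_G(B_G(\sigma))$---is the same as the paper's. The genuine gap is in the crucial step, the passage from $\int_V \varphi_y(g_t hx)\,dm_H(h)$ to an honest $L^2(m_X)$ correlation. The pushforward of $m_H|_V$ under $h\mapsto hx$ is singular with respect to $m_X$ (the piece of orbit $Vx$ has positive codimension $\dim Z+\dim H^-$), so some transverse thickening is unavoidable; and your claim that the smearing error is ``$O(\mathrm{diam})$, uniformly in $x\in L$'' never addresses the dependence on $t$, which is the whole point. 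After applying $g_t$, a transverse displacement $\tilde v$ of the base point becomes $\Phi_t(\tilde v)$, and for the error to be controlled for \emph{all} $t>0$ the thickening must be taken inside the non-expanding subgroup $\tilde H$ with Lie algebra $\fn\oplus\fh^-$: the paper chooses a small ball $\tilde V\subset\tilde H$ with $\dist_G\big(e_G,\Phi_t(\tilde v)\big)<(1-\alpha)\sigma$ for all $t>0$, and this structural fact is what makes the argument work. Moreover, since the test functions are indicators, a pointwise ``error of size $\mathrm{diam}$'' makes no sense; the boundary effect is handled in the paper by a two-radius device (running mixing with $B_X(y,\alpha\sigma)$, concluding for $B_X(y,\sigma)$, with $m_G(B_G(\alpha\sigma))>m_G(B_G(\sigma))-\eta/2$) together with the one-sided containment $\tilde V Vx\cap g_t^{-1}B_X(y,\alpha\sigma)\subset \tilde V W x$, where $W=\{h\in V: g_thx\in B_X(y,\sigma)\}$, and then cancelling $m_{\tilde H}(\tilde V)$ using $m_X(\tilde V W x)=m_{\tilde H}(\tilde V)m_H(W)$. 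Without identifying the transverse group and its non-expansion under forward conjugation, and without some such device for the indicator boundary, your reduction does not go through as stated.

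Two smaller points. The family $\Psi$ you propose is not ``genuinely finite'': its elements depend on the base point $x$ ranging over the continuum $L$ (their supports move with $x$), so you still need $L^2$-precompactness via continuity in $x$, exactly as for the ball family; the paper takes $\Xi=\{\mathds{1}_{\tilde V Vx}:x\in L\}$ and $\Psi=\{\mathds{1}_{B_X(y,\alpha\sigma)}:y\in K\}$. Also, only the lower bound is needed, so the two-sided estimate you aim for is unnecessary (and, for the upper bound, would require an additional outer-radius argument).
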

\begin{proof}
    The proof is a modification of that of \cite[Proposition 2.4]{Kl2}. 
    By partitioning $V$ into finitely many smaller subsets with boundary of measure zero and noting the invariance of $m_H$, one can without loss of generality assume that $e_H\in V$, $m_H(V)<1$ and $\textup{diam}(V) < \rho(L)$. 
  The latter condition implies that there is an open ball $U \subset G$ centered at $e_G$ and containing $V$ such that for all $x \in L$ the map
        $\pi_x$ 
    is injective on $U$, and hence
    \begin{equation*}
        m_X(Ux) = m_G(U).
    \end{equation*}
Similarly we have, for any $0 < \alpha < 1$ and $y \in K$,
    \begin{equation*}
        m_X\big(B_X(y,\alpha \sigma)\big) = m_G\big(B_G(\alpha \sigma)\big).
    \end{equation*}
    Fix $\alpha <1$ such that
    \begin{equation}\label{eq: alpha ball close}
        m_G\big(B_G(\alpha \sigma)\big) > m_G\big(B_G(\sigma)\big) - \frac{\eta}{2}. 
    \end{equation}
    Now let $\tilde{H}$ 
    and $m_{\tilde{H}}$ 
    be as in Definition~\ref{horosph}.
Choose a nonempty open ball $\tilde V \subset \tilde{H}$ such that $\tilde VV \subset U$ and such that for all 
    $\tilde v \in \tilde V$ and $t>0$ we have
    \begin{equation*}
        \dist_G\big(e_G,\Phi_t(\tilde v)\big) < (1-\alpha)\sigma.
    \end{equation*}
    Now for $y \in K$, $x \in L$ and $t>0$ denote
    \begin{equation*}
        W := \left\lbrace h \in V : g_t hx \in B_X(y,\sigma)\right\rbrace\ \text{ and }\ U' := \tilde V V.
    \end{equation*}
    \begin{claim}\label{claim: 99 containment}
        The set $U'x \cap g_t^{-1} B_X(y, \alpha \sigma)$ is contained in $\tilde V W x$.
    \end{claim}
    \begin{proof}
        Let $\tilde v \in \tilde V$, $h \in V$ be such that $g_t\tilde vhx \in B_X(y, \alpha \sigma)$. We have
        \begin{equation*}
        \begin{split}
            \dist_X(g_t h x, y) &\leq \dist_X(g_t hx, \Phi_t(\tilde v)g_thx) + \dist_X(g_t\tilde vhx, y) \\
            &< (1-\alpha)\sigma + \alpha \sigma = \sigma.
        \end{split}
        \end{equation*}
        Thus $g_thx \in B_X(y,\sigma)$.
    \end{proof}
    We now use Proposition \ref{thm: KM mixing} with 
    \begin{equation*}
        \Xi := \left\lbrace \mathds{1}_{U'x} 
        : x \in L\right\rbrace,\ 
\Psi := \{\mathds{1}_{B_X(y,\alpha \sigma}) 
: y \in K\},
    \end{equation*} 
    and with 
    $\eta$ replaced by $\eta m_{\tilde{H}}(\tilde V)/2$.
    This gives $t_0 >0$ such that, for all $t>t_0$, $x \in L$ and $y \in K$, we have
    \begin{equation*}
        \left|m_X\big(U'x \cap g_t^{-1}B_X(y,\alpha \sigma)\big) - m_G(U')m_G\big(B_G(\alpha \sigma)\big) \right| < \eta m_{\tilde{H}}(\tilde{V})/2.
    \end{equation*}
    In particular, 
     using the choice of $m_{\tilde{H}}$ and Claim \ref{claim: 99 containment}, we compute
    \begin{equation*}
    \begin{split}
         m_{\tilde{H}}(\tilde{V})m_H(W) &=m_X(\tilde VWx) \\
        &\geq m_X\big(U'x \cap g_t^{-1}B_X(y,\alpha \sigma)\big) \\
        &\geq m_G(U') m_G\big(B_G(\alpha \sigma)\big) - \eta m_{\tilde{H}}(\tilde V)/2 \\
        &= m_{\tilde{H}}(\tilde V)m_H(V) m_G\big(B_G(\alpha \sigma)\big) - \eta m_{\tilde{H}}(\tilde V)/2.
    \end{split}
    \end{equation*}
    Canceling $m_{\tilde{H}}(\tilde V)$, using \eqref{eq: alpha ball close} and the fact that $m_H(V) < 1$, we get
    \begin{equation*}
        \begin{split}
            m_{H}(W) &\geq m_H(V) m_G\big(B_G(\alpha \sigma)\big) - \eta/2  \\
            &\geq m_H(V) m_G\big(B_G( \sigma)\big) - m_H(V)\eta/2 - \eta/2 \\
            &\geq m_H(V)m_G\big(B_G( \sigma)\big) - \eta
        \end{split}
    \end{equation*}
    which gives the desired result.
\end{proof}

\begin{defn}[Tessellation domains in $H$]\label{defn: tessellation}
    Following \cite{KM}, we say that a  subset $V$ of a Lie group $H$ is 
    a {\sl tessellation domain} 
    relative to a countable subset $\Lambda$ of $H$ if 
\begin{itemize}
\item[(a)] 
$m_H (\partial V) = 0$;
\item[(b)] 
$V \gamma_1 \cap V \gamma_2 = \varnothing$ for different $\gamma_1,\gamma_2 \in \Lambda$;
\item[(c)] 
$H = \bigcup\limits_{\gamma  \in \Lambda } {\overline V \gamma }.$
\end{itemize}  
For example, the set $\{(x_1,\dots, x_k): |x_i| < 1/2\}\subset \R^k$ is a tessellation domain relative to $\Z^k\subset \R^k$.
This can be generalized to tessellations of connected simply connected nilpotent Lie groups as follows.
\end{defn}
\begin{lem}[Proposition 3.3 in \cite{KM}]\label{lem: tessellation domain}
    Any connected simply connected nilpotent Lie group $H$ admits a bounded tessellation domain. More precisely, there exists a basis $\{b_i \}$   for $\fh = \Lie(H)$ 
    such that for every $r>0$,  the set
    \begin{equation}\label{defvr}
       V_r:=\exp\Big(
       \Big\{ \sum_{i} c_i b_i \in \fh : |c_i| < r \Big\}
       \Big)
    \end{equation}
    is a tessellation domain in $H$.
\end{lem}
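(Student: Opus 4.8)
The plan is to argue by induction on $n:=\dim H$, splitting off a one-dimensional central direction at each step (this is the approach of \cite{KM}). The base case $n\le 1$ is immediate: if $\fh=\R b_1$ then $H=\exp(\R b_1)\cong\R$, the set $V_r=\exp\big((-r,r)b_1\big)$ is an interval, and it tessellates $H$ relative to the countable set $\Lambda_r:=\{\exp(2rk b_1):k\in\Z\}$.

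For the inductive step assume $n\ge 2$. Since $\fh$ is nilpotent its center is nonzero, so I would pick $b_1$ spanning a one-dimensional ideal $\mathfrak{z}\subset\fh$. Because $H$ is connected, simply connected and nilpotent, $\exp\colon\fh\to H$ is a diffeomorphism, so $Z:=\exp(\R b_1)$ is a closed central subgroup isomorphic to $\R$, and $\bar H:=H/Z$ is again connected, simply connected and nilpotent, of dimension $n-1$; write $p\colon H\to\bar H$ for the projection and $dp\colon\fh\to\bar\fh:=\Lie(\bar H)$ for its differential, so that $\ker dp=\R b_1$. Applying the inductive hypothesis to $\bar H$ produces a basis $\{\bar b_2,\dots,\bar b_n\}$ of $\bar\fh$ for which every cube $\bar V_r$ is a tessellation domain of $\bar H$ relative to some countable $\bar\Lambda_r\subset\bar H$. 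I would then lift each $\bar b_i$ to a vector $b_i\in\fh$, so that $\{b_1,\dots,b_n\}$ is a basis of $\fh$ chosen independently of $r$.

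The key point to establish is that, since $b_1$ is central, the Baker--Campbell--Hausdorff formula gives $\exp(c_1 b_1+Y)=\exp(Y)\exp(c_1 b_1)$ for all $c_1\in\R$ and all $Y\in\mathrm{span}(b_2,\dots,b_n)$; hence, writing $E_r:=\exp\big(\{\sum_{i\ge 2}c_i b_i:|c_i|<r\}\big)$, one obtains the factorization $V_r=E_r\cdot\exp\big((-r,r)b_1\big)$. Naturality of $\exp$ together with $dp(b_1)=0$ shows that $p$ restricts to a diffeomorphism $E_r\to\bar V_r$; consequently, for any lift $\gamma'\in H$ of a given $\bar\gamma\in\bar\Lambda_r$, the preimage $p^{-1}(\bar V_r\bar\gamma)$ equals $E_r\gamma' Z$, and within this \emph{column} the translates $V_r\,\gamma'\exp(2rk b_1)$ ($k\in\Z$) are pairwise disjoint, have union of full Haar measure, and have closures covering the column, because the intervals $(2rk-r,2rk+r)$ tile $\R$. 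I would then set $\Lambda_r:=\{\gamma'\exp(2rk b_1):\bar\gamma\in\bar\Lambda_r,\ k\in\Z\}$, choosing one lift $\gamma'$ per $\bar\gamma$; this set is countable.

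Finally one verifies the three defining properties in Definition \ref{defn: tessellation} for $V_r$ relative to $\Lambda_r$. Property (a) holds because $\exp$ is a diffeomorphism and the boundary of an open cube in $\fh$ is Lebesgue-null. For (b): translates lying over distinct $\bar\gamma_1,\bar\gamma_2\in\bar\Lambda_r$ are disjoint since their $p$-images are the disjoint open sets $\bar V_r\bar\gamma_1$ and $\bar V_r\bar\gamma_2$, while translates over a common $\bar\gamma$ are disjoint by the column computation above. For (c): given $g\in H$, pick $\bar\gamma\in\bar\Lambda_r$ with $p(g)\in\overline{\bar V_r}\,\bar\gamma$; then $g$ lies in $p^{-1}(\overline{\bar V_r}\,\bar\gamma)=\bigcup_{k\in\Z}\overline{V_r}\,\gamma'\exp(2rk b_1)$, so $g\in\overline{V_r}\gamma$ for a suitable $\gamma\in\Lambda_r$. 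I expect the only mildly delicate points to be the bookkeeping with closures in (c) and the verification that $Z$ is closed and isomorphic to $\R$ — the single place where simple connectedness enters; the core identity $V_r=E_r\cdot\exp\big((-r,r)b_1\big)$ is elementary once $b_1$ is taken to be central.
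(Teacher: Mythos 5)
Your proof is correct: the factorization $V_r=E_r\cdot\exp\big((-r,r)b_1\big)$ via a central $b_1$, the lifting of a tessellation of $H/\exp(\R b_1)$, and the column-by-column verification of properties (a)--(c) all check out, and the basis is chosen independently of $r$ as required. The paper itself offers no proof (it quotes Proposition 3.3 of \cite{KM}), and your induction on dimension splitting off a one-parameter central subgroup is essentially the same argument as in that reference.
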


Another result from \cite{KM} estimates the boundary effects  of partitioning the $\Phi_t$-image of $V$ into translates $V\gamma$:
\begin{prop}[Corollary 3.5 in \cite{KM}]\label{prop: expansion}
    For any tessellation $(V,\Lambda)$ of $H$ and any $\eta>0$ there exists $T>0$ such that
\begin{equation}
t\ge T\quad\Longrightarrow\quad\#\{\gamma\in\Lambda:
V\gamma\subset\Phi_t(V)\} 
\ge e^{\chi t}\cdot(1-\eta)\,.
\end{equation}
\end{prop}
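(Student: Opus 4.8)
The plan is to transport the count through the contracting automorphism $\Phi_{-t}$ — turning ``translates that fit inside the expanded set $\Phi_t(V)$'' into ``fine cells that fit inside $V$'' — and then to show that the only loss, coming from cells straddling $\partial V$, is negligible compared with $e^{\chi t}$. Here, as in all our applications, $V$ is a bounded tessellation domain — e.g.\ one of the $V_r$ of Lemma~\ref{lem: tessellation domain} — so that $0 < m_H(V) < \infty$. Since $\Phi_{-t}$ is a group automorphism of $H$ (Definition~\ref{defn: congujation}), for $\gamma\in\Lambda$ and $t>0$ we have $V\gamma\subset\Phi_t(V)$ if and only if $W_t\delta\subset V$, where $W_t := \Phi_{-t}(V)$ and $\delta := \Phi_{-t}(\gamma)$; as $\gamma$ ranges over $\Lambda$, the point $\delta$ ranges bijectively over $\Lambda_t := \Phi_{-t}(\Lambda)$, so the quantity to bound equals $N(t) := \#\{\delta\in\Lambda_t : W_t\delta\subset V\}$. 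By \eqref{eq: chi t}, $m_H(W_t) = e^{-\chi t}m_H(V)$, and since $\Phi_{-t}$ preserves conditions (a)--(c) of Definition~\ref{defn: tessellation}, the closed cells $\overline{W_t}\delta$, $\delta\in\Lambda_t$, still cover $H$.

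The key geometric input, and the step I expect to require the most care, is that these cells become uniformly small: put $d(t) := \operatorname{diam}_{\dist_H}\!\big(\overline{W_t}\big)$; I claim $d(t)\to 0$ as $t\to\infty$. By \eqref{conj}, $\Phi_{-t}$ corresponds under $\exp\colon\fh\to H$ to $e^{-t\,\ad_{a_0}}|_{\fh}$, and since every eigenvalue of $\ad_{a_0}$ on $\fh\otimes_\R\Co$ has positive real part, the operator norm $\|e^{-t\,\ad_{a_0}}|_{\fh}\|$ tends to $0$; hence $\Phi_{-t}(\overline V)$ eventually lies in arbitrarily small metric balls about $e_H$, which gives $d(t)\to 0$ by continuity of $\exp$. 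This is the uniform version, on the compact set $\overline V$, of the pointwise convergence in Lemma~\ref{lem: contracting expanding explicit computation}. Because $\dist_H$ is right-invariant, $\operatorname{diam}(\overline{W_t}\delta) = d(t)$ for every $\delta$.

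Granting this, the rest is short. Let $V^{(t)} := \{h\in V : \dist_H(h, H\setminus V) > d(t)\}$. If $h\in V^{(t)}$ lies in some cell $\overline{W_t}\delta$ (one exists, by the covering property), then $\overline{W_t}\delta$ — of diameter $d(t)$ and containing $h$ — is contained in the closed $d(t)$-ball about $h$, hence in $V$; so $W_t\delta\subset V$ and $\delta$ is counted by $N(t)$. Thus $V^{(t)}$ is covered by the $N(t)$ cells with $W_t\delta\subset V$, and subadditivity together with right-invariance of $m_H$ gives $m_H(V^{(t)}) \le N(t)\,m_H(\overline{W_t}) = N(t)\,e^{-\chi t}m_H(V)$, i.e.
\[
N(t) \;\ge\; e^{\chi t}\,\frac{m_H\big(V^{(t)}\big)}{m_H(V)}.
\]
As $\e\downarrow 0$ the sets $\{h : \dist_H(h, H\setminus V) > \e\}$ increase to a set differing from $V$ by a subset of the $m_H$-null set $\partial V$ (condition~(a)), so their measures converge to $m_H(V)$; since $d(t)\to 0$, it follows that $m_H(V^{(t)})\to m_H(V)$. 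Given $\eta>0$, take $T$ with $m_H(V^{(t)}) > (1-\eta)m_H(V)$ for all $t\ge T$; then $N(t) > e^{\chi t}(1-\eta)$ for $t\ge T$, which is the assertion. Apart from the uniform bound $d(t)\to 0$, this is exactly the ``expanding volume minus negligible boundary'' bookkeeping of \cite{KM}.
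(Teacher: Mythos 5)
Your proof is correct, and it is essentially the standard argument: the paper itself does not prove this proposition but imports it verbatim as Corollary 3.5 of \cite{KM}, and your contraction-by-$\Phi_{-t}$ bookkeeping (shrinking cells, negligible boundary layer) is a faithful reconstruction of that argument, just run in the contracted picture rather than the expanded one. Two small points worth making explicit: the diameter claim $d(t)\to 0$, and hence the whole argument, uses that $V$ is bounded with $0<m_H(V)<\infty$ --- which you state, and which matches how the proposition is used here, since $V$ is always one of the domains $V_r$ of Lemma \ref{lem: tessellation domain}; and the identity $m_H\big(\overline{W_t}\big)=e^{-\chi t}m_H(V)$ tacitly uses $m_H(\partial V)=0$ together with the fact that $\Phi_{-t}$ scales $m_H$ by $e^{-\chi t}$, so that $\overline{W_t}=\Phi_{-t}(\overline V)$ and $W_t$ have equal measure --- a one-line justification you should include.
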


\begin{defn}[Strongly tree-like collections and Cantor sets]\label{defn: tree like collection}\label{defn: tree-like collection}
    We say a collection of subsets $\ca{E} \subset 2^H$ is \textsl{strongly tree-like} if:
    \begin{itemize}
        \item Each $E \in \ca{E}$ is a compact set with nonempty interior.
        \item We have a partition
        \begin{equation*}
            \ca{E} = \bigcup_{k \in \Z_{\geq 0}} \ca{E}_k
        \end{equation*}
        with each $\ca{E}_k$ being finite and $\ca{E}_0$ being a singleton.
        \item If $E' \in \ca{E}_k$ with $k \in \N$, there is a unique $E \in \ca{E}_{k-1}$ with $E' \subset E$.
        \item For each $E \in \ca{E}_k$ with $k \in \Z_{\geq 0}$, there exists $E' \in \ca{E}_{k+1}$ with $E' \subset E$.
        \item If $E_1, E_2 \in \ca{E}_k$ are distinct, then $m_H(E_1 \cap E_2)=0$. 
        \item If we define 
        \begin{equation}\label{eq: diam} 
        d_k := \sup\{\on{diam}_H(E) : E \in \ca{E}_k\}, \text{ then } \lim_{k \to \infty} d_k = 0.
        \end{equation}
    \end{itemize}
    We write
    \begin{equation*}
        \cup \ca{E}_k := \bigcup_{E \in \ca{E}_k} E
    \end{equation*}
    and define the {\sl limit set} of the collection to be $E_\infty := \bigcap_{k \in \N} \cup \ca{E}_k$.
\end{defn}
Associated to a tree-like collection $\ca{E}$ we also define, for $E \in \ca{E}_k$,
\begin{equation*}
    \on{density}(\ca{E}_{k+1},E) := \frac{m_H((\cup \ca{E}_{k+1}) \cap E)}{m_H(E)}.
\end{equation*}
Further, for each $k \in \Z_{\geq 0}$, we set 
\begin{equation}\label{eq: density}
        \Delta_k := \inf\left\lbrace \operatorname{density}(\mathcal{E}_{{k+1}}, E) : E \in \mathcal{E}_k \right\rbrace
\end{equation}
\begin{thm}[Lemma 2.1 in \cite{U}]\label{thm: urbanski}
    For a tree-like collection $\ca{E}$ and the resulting 
limit set $E_\infty$, we have the dimension estimate
    \begin{equation*}
        \dim H - \dim E_\infty \leq \limsup_{k \to \infty} \frac{\sum_{j=0}^{k} \log \Delta_j}{\log d_k}.
    \end{equation*}
\end{thm}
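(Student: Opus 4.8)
The plan is to prove the equivalent lower bound $\dim E_\infty\ge n-q$, where $n:=\dim H$ and $q:=\limsup_{k\to\infty}\tfrac{\sum_{j=0}^{k}\log\Delta_j}{\log d_k}$, by the mass distribution principle. First a few preliminaries. Each $E\in\ca{E}_k$ has a child, which is compact with nonempty interior, so the $m_H$-measure of the union of the children of $E$ is positive and $\le m_H(E)$; hence $0<\Delta_j\le1$ and $\log\Delta_j\le0$ for every $j$. Since children have diameter no larger than their parent, the sequence $(d_k)$ is nonincreasing and tends to $0$, so $\log d_k<0$ for large $k$ and $q\in[0,+\infty]$. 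Also $E_\infty=\bigcap_k\cup\ca{E}_k$ is nonempty, being a nested intersection of nonempty compacta. If $q=+\infty$ or $q\ge n$ the inequality is trivial, so assume $0\le q<n$; it then suffices to show $\dim E_\infty\ge n-q-\vre$ for each $\vre\in(0,\,n-q)$.

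Next I construct the measure. Put $\mu(E_0):=1$ and distribute mass down the tree so that, for $E\in\ca{E}_k$ with children $C_1,\dots,C_\ell\in\ca{E}_{k+1}$, one sets $\mu(C_i):=\mu(E)\,\frac{m_H(C_i)}{\sum_j m_H(C_j)}$. Since distinct members of $\ca{E}_k$ meet in $m_H$-null sets, any member of $\ca{E}_{k+1}$ whose parent is not $E$ meets $E$ in an $m_H$-null set, so $\sum_j m_H(C_j)=m_H\big((\cup\ca{E}_{k+1})\cap E\big)=\operatorname{density}(\ca{E}_{k+1},E)\,m_H(E)$. These weights form a consistent family of cylinder masses on the compact inverse-limit space of infinite nested branches $(E_0\supset E_1\supset\cdots)$, $E_j\in\ca{E}_j$; pushing the corresponding probability measure forward under the map sending a branch to the single point $\bigcap_j E_j$ produces a Borel probability measure $\mu$ on $E_\infty$. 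Telescoping along the ancestral chain $E_0\supset\cdots\supset E_k=E$, the cylinder weight $w(E):=\mu(\{\text{branches through }E\})$ satisfies
\begin{equation*}
w(E)=\frac{m_H(E)}{m_H(E_0)}\prod_{j=0}^{k-1}\operatorname{density}(\ca{E}_{j+1},E_j)^{-1}\ \le\ \frac{m_H(E)}{m_H(E_0)}\prod_{j=0}^{k-1}\Delta_j^{-1},
\end{equation*}
the inequality being the definition $\operatorname{density}(\ca{E}_{j+1},E_j)\ge\Delta_j$.

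The crux is the ball estimate. Fix $x\in E_\infty$ and small $r>0$, and set $k=k(r):=\min\{m:d_m\le r\}$; this is well defined, $k(r)\to\infty$ as $r\to0$, and $d_{k-1}>r$. The limit point of a branch lies in that branch's level-$k$ set, so if a branch has limit point in $B_H(x,r)$ then its level-$k$ set meets $B_H(x,r)$; since the cylinders over distinct members of $\ca{E}_k$ are disjoint in the branch space, $\mu\big(B_H(x,r)\big)\le\sum\{w(E):E\in\ca{E}_k,\ E\cap B_H(x,r)\ne\varnothing\}$. Each such $E$ has diameter $\le d_k\le r$, hence lies in $B_H(x,2r)$. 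Factoring the common factor $\prod_{j<k}\Delta_j^{-1}$ out of the bound on $w(E)$, using that $\ca{E}_k$ is $m_H$-a.e.\ disjoint so $\sum_{E\cap B_H(x,r)\ne\varnothing}m_H(E)\le m_H\big(B_H(x,2r)\big)$, and using $m_H\big(B_H(\cdot,\rho)\big)\le C\rho^n$ for small $\rho$ (balls in the right-invariant metric on the nilpotent group $H$ have volume depending only on their radius), we obtain
\begin{equation*}
\mu\big(B_H(x,r)\big)\ \le\ \frac{C\,(2r)^n}{m_H(E_0)}\,\exp\!\Big(-\sum_{j=0}^{k-1}\log\Delta_j\Big).
\end{equation*}
For $r$ small enough that $k-1$ lies in the range where $\tfrac{\sum_{j=0}^{m}\log\Delta_j}{\log d_m}\le q+\vre$ and $d_m<1$, applying this at $m=k-1$ and using $d_{k-1}>r$ gives $-\sum_{j=0}^{k-1}\log\Delta_j\le(q+\vre)(-\log d_{k-1})\le(q+\vre)(-\log r)$, whence $\mu\big(B_H(x,r)\big)\le C'\,r^{\,n-q-\vre}$ for all small $r$. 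The mass distribution principle (see e.g.\ \cite[Principle~4.2]{F}) then gives $\dim E_\infty\ge n-q-\vre$, and letting $\vre\downarrow0$ completes the proof.

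The step I expect to be the main obstacle is the passage from the node bound on $w(E)$ to the ball bound on $\mu\big(B_H(x,r)\big)$: a ball can meet many members of a single generation, so estimating by (count of such members)$\times$(largest node mass) is hopelessly lossy, since the $d_k$ need not decay geometrically. The remedy is to evaluate at the generation $k(r)$ at which all sets already have diameter $\le r$ — so every one of them meeting $B_H(x,r)$ sits inside $B_H(x,2r)$ — and then to add up their $m_H$-measures rather than their number, after pulling out the generation-independent ``defect'' $\prod_{j<k(r)}\Delta_j^{-1}$; the additivity of these measures up to $m_H\big(B_H(x,2r)\big)$ is exactly what the $m_H$-a.e.\ disjointness of $\ca{E}_k$ supplies. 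The only remaining subtlety is a harmless index shift between $\sum_{j\le k-1}$ and $\sum_{j\le k}$ (and between $d_{k-1}$ and $d_k$), absorbed using $\log\Delta_k\le0$ and the fact that only the $\limsup$ matters.
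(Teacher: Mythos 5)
Your argument is correct: the quoted result is not proved in the paper at all (it is cited as Lemma 2.1 of Urbanski \cite{U}), and your mass-distribution-principle proof is essentially the standard argument behind that lemma — distribute a probability measure down the tree proportionally to $m_H$-mass, bound the cylinder weight by $\frac{m_H(E)}{m_H(E_0)}\prod_{j<k}\Delta_j^{-1}$, sum over the a.e.-disjoint level-$k(r)$ sets meeting $B_H(x,r)$ to get $\mu\big(B_H(x,r)\big)\le C' r^{\,n-q-\vre}$, and apply Frostman's lemma. The key points (using the generation at which diameters drop below $r$, summing $m_H$-measures inside $B_H(x,2r)$ rather than counting nodes, and the sign bookkeeping in passing from the $\limsup$ to the exponent) are all handled correctly, so there is nothing to fix.
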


\section{The generic and special steps of the induction}

The desired orbits in Theorem \ref{thm: theorem on H} are constructed inductively. We start with a base point $x_0$ and a tessellation domain $V \subset H$; the latter will always be chosen of the form $V = V_r$ as in \eqref{defvr}, where $r > 0$ is small enough. Then, roughly speaking, we choose a collection of  
subsets $\{W \subset V\}$ 
such that  $g_t$-images of $Wx_0$ 
have a certain behavior.
The expansive properties of the flow ensure that each $g_{t}Wx_0 = \Phi_t(W)g_tx_0$ looks precisely like $Vx_1$ for some $x_1 \in X$, and from there we begin the process all over again.
The 
two 
types of behavior to be considered are 1) being bounded within a certain compact set, and 2) 
approaching a prescribed point $y$. 
We now formalize these two steps as follows.

\begin{prop}[The generic step]\label{prop: generic}
    Let $K\subset X$ be a compact set with $m_X(\partial {K})=0$ and let  $\eta >0 $. Then 
    one can choose $\delta > 0$ with the following property: 
    for any  $r < \delta$
   there exist a tessellation
    $(V_r,\Lambda_r)$ of $H$ and a time 
    $t_0 >0$ such that for all points $x \in K$ {and $t\ge t_0$} we have 
    \begin{equation}\label{eq: generic conclusion}
       {\# \left\lbrace \gamma \in \Lambda_r : V_r\gamma \subset \Phi_{t}(V_r)\text{ and } V_r\gamma g_{t} x \subset K \right\rbrace \geq e^{\chi t} \big( m_X(K) - \eta\big)},
    \end{equation}
   where $\chi$ is as in \eqref{eq: chi t}.
\end{prop}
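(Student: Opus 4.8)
The plan is to combine two results quoted above: the equidistribution of $g_t$-translates of $H$-orbits (Proposition~\ref{prop: KM equidistribution generic}, which is where mixing enters) and the count of tessellation translates inside $\Phi_t(V_r)$ (Proposition~\ref{prop: expansion}). The bridge between them is a diameter bound for the pieces $V_r\gamma g_t x$ that is \emph{uniform} in $\gamma\in H$, $t\in\R$ and $x\in X$: from the elementary estimate $\dist_X(gp,p)\le\dist_G(g,e_G)$ (immediate from the right-invariance of $\dist_G$) together with $\dist_G\le\dist_H$ along $H$, one gets $\dist_X(h_1\gamma g_t x,h_2\gamma g_t x)\le\dist_H(h_1,h_2)$ and hence
\begin{equation*}
\operatorname{diam}_X\big(V_r\gamma g_t x\big)\ \le\ \operatorname{diam}_H(V_r)\ =:\ \rho_r .
\end{equation*}

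Next I would fix the scale. Since $\rho_r\to 0$ as $r\to 0$, and since the open sets $U_\rho:=\{z\in X:\dist_X(z,X\smallsetminus K)>\rho\}$ increase to $\operatorname{int}(K)$ as $\rho\downarrow 0$ while $m_X(\operatorname{int} K)=m_X(K)$ (here we use $m_X(\partial K)=0$), I choose $\delta>0$ so small that $m_X(U_{\rho_r})>m_X(K)-\eta/4$ for every $r<\delta$. For each such $r$, inner regularity of $m_X$ produces a compact $Q_r\subseteq U_{\rho_r}$ with $m_X(\partial Q_r)=0$ and $m_X(Q_r)>m_X(K)-\eta/2$ (a compact subset with negligible boundary and nearly full measure inside an open set always exists, e.g.\ as a finite union of closed balls of generic radii).

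With $r<\delta$ fixed, take $(V_r,\Lambda_r)$ to be the tessellation of $H$ from Lemma~\ref{lem: tessellation domain}; recall $V_r$ is bounded. Apply Proposition~\ref{prop: expansion} to $(V_r,\Lambda_r)$ with parameter $\eta/4$ to obtain $T_1$ with $\#N_t\ge e^{\chi t}(1-\eta/4)$ for $t\ge T_1$, where $N_t:=\{\gamma\in\Lambda_r:V_r\gamma\subseteq\Phi_t(V_r)\}$; and apply Proposition~\ref{prop: KM equidistribution generic} with $V=V_r$, $Q=Q_r$, $L=K$ and error $\tfrac{\eta}{4}m_H(V_r)$ to obtain $T_2$ with $m_H(G_t)>m_H(V_r)\big(m_X(Q_r)-\eta/4\big)$ for all $t\ge T_2$ and all base points $x\in K$, where $G_t:=\{h\in V_r:g_t h x\in Q_r\}$. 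Set $t_0:=\max(T_1,T_2)$. For $\gamma\in N_t$ put $W_\gamma:=\Phi_{-t}(V_r\gamma)\subseteq V_r$; these are pairwise disjoint (by the tessellation property) with $m_H(W_\gamma)=e^{-\chi t}m_H(V_r)$ by \eqref{eq: chi t} and bi-invariance of $m_H$, and $g_t W_\gamma x=V_r\gamma g_t x$. The decisive point is: \emph{if $\gamma\in N_t$ is bad}, meaning $V_r\gamma g_t x\not\subseteq K$, \emph{then $W_\gamma\cap G_t=\varnothing$} --- indeed some point of $V_r\gamma g_t x$ lies in $X\smallsetminus K$, so by the diameter bound every point of $V_r\gamma g_t x=g_t W_\gamma x$ lies within $\rho_r$ of $X\smallsetminus K$, hence outside $U_{\rho_r}\supseteq Q_r$, so no $w\in W_\gamma$ has $g_t w x\in Q_r$. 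Therefore $G_t$ is contained in the union of the good $W_\gamma$ together with the leftover set $V_r\smallsetminus\bigcup_{\gamma\in N_t}W_\gamma$, whose measure is at most $\tfrac{\eta}{4}m_H(V_r)$ by the count on $\#N_t$; comparing measures,
\begin{equation*}
m_H(V_r)\big(m_X(Q_r)-\tfrac{\eta}{4}\big)\ <\ m_H(G_t)\ \le\ \#\{\text{good }\gamma\}\cdot e^{-\chi t}m_H(V_r)+\tfrac{\eta}{4}m_H(V_r),
\end{equation*}
and dividing by $m_H(V_r)$ and using $m_X(Q_r)>m_X(K)-\eta/2$ yields $\#\{\text{good }\gamma\}>e^{\chi t}\big(m_X(K)-\eta\big)$, which is \eqref{eq: generic conclusion}.

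The proof has no deep step: all of the dynamics is outsourced to Propositions~\ref{prop: KM equidistribution generic} and \ref{prop: expansion}, and what remains is the bookkeeping that converts ``$V_r\gamma g_t x$ meets $X\smallsetminus K$'' into a measure defect of $G_t$. The two places needing care are (i) the uniform diameter bound above, which is exactly what lets ``bad'' imply ``$W_\gamma$ misses $G_t$'' with no dependence on $\gamma$, $t$ or $x$, and (ii) replacing $K$ by a compact $Q_r$ with $m_X(\partial Q_r)=0$ (so Proposition~\ref{prop: KM equidistribution generic} applies) at a cost of less than $\eta$ in measure --- this is why $\delta$ is selected first and $V_r$, $Q_r$, $t_0$ are then produced for the given $r$. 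Uniformity over $x\in K$ is free, since both propositions already carry the base-point set $L=K$ as a parameter.
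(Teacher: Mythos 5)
Your proposal is correct and follows essentially the same route as the paper: shrink $K$ to a compact set with negligible boundary (your $Q_r$, the paper's $K'$ with $V_\delta V_\delta^{-1}K'\subset K$) so that any tessellation piece meeting the shrunk set lies entirely in $K$, then combine Proposition~\ref{prop: KM equidistribution generic} (applied with $L=K$) with the count from Proposition~\ref{prop: expansion} and compare measures of the translates $\Phi_{-t}(V_r\gamma)$. The only differences are cosmetic: you phrase the shrinking metrically via the uniform diameter bound $\operatorname{diam}_X(V_r\gamma g_tx)\le\operatorname{diam}_H(V_r)$ and do the bookkeeping inside $V_r$ rather than inside $\Phi_t(V_r)$, with $\eta/4$'s in place of the paper's $\eta/3$'s.
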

\begin{proof}
    This is a simplified version of \cite[Proposition 3.6]{KM}. We repeat the argument here for the sake of 
clarity.
    The proposition is true if $m_X(K)=0$ by choosing $\delta$ and $t_0$ arbitrarily. {Thus we can assume that the interior of $K$ has positive measure.}
   \smallskip
   
{Given $\eta>0$},
we choose a compact subset $K' \subset K$  and 
$\delta > 0$
so that:
    \begin{enumerate}
        \item[(a)] 
    $m_X(\partial K') =0$ and $m_X(K') > m_X(K) - \eta/3$;
        \item[(b)] $V_\delta V_\delta^{-1} K' \subset K$ (hence $V_r V_r^{-1} K' \subset K$ for any 
        $0< r < \delta$).
    \end{enumerate}
    For any $r$ as above let $V = V_r$ and choose $\Lambda = \Lambda_r$ such that 
    $(V,\Lambda)$ is a tessellation of $H$. Then for all $x \in X$ and $t>0$ we have the following estimate:
\begin{equation}\label{eq: bdd page 19}
\begin{split}
 &\# \left\lbrace \gamma \in \Lambda: V\gamma \subset \Phi_{t}(V)\text{ and } V\gamma g_{t}x \subset K \right\rbrace 
    \\\geq\ \   &\#\left\lbrace \gamma \in \Lambda : V\gamma \subset \Phi_{t}(V) \text{ and }  V\gamma g_{t} x \subset VV^{-1}K' \right\rbrace \\
    \geq\ \  &\#\left\lbrace \gamma \in \Lambda : V\gamma \subset \Phi_{t}(V) \text{ and }  \gamma g_{t} x \in V^{-1}{K}' \right\rbrace \\
    \geq\ \   &\#\left\lbrace \gamma \in \Lambda : V\gamma \subset \Phi_{t}(V) \text{ and } V\gamma g_{t} x \cap K' \neq \varnothing \right\rbrace \\
    =\ \   &\# \left\lbrace \gamma \in \Lambda : V\gamma \subset \Phi_{t}(V) \right\rbrace - \#\left\lbrace \gamma \in \Lambda : V\gamma \subset \Phi_{t}(V)\text{ and }   V\gamma g_{t} x \cap K' = \varnothing\right\rbrace
    \\
    \geq\ \   &\# \left\lbrace \gamma \in \Lambda : V\gamma \subset \Phi_{t}(V) \right\rbrace - \frac{ m_H\big(\Phi_{t}(V) \smallsetminus \{ h \in \Phi_{t}(V): hg_{t}x \in K'\}\big)} {m_H(V)}\\
    \geq\ \   &\# \left\lbrace \gamma \in \Lambda : V\gamma \subset \Phi_{t}(V) \right\rbrace - \frac{ m_H\big(\Phi_{t}(V)\big) - m _H(\{h \in \Phi_{t}(V): hg_{t}x \in K'\})}{m_H(V)}\\
    =\ \   &\# \left\lbrace \gamma \in \Lambda : V\gamma \subset \Phi_{t}(V) \right\rbrace -e^{\chi t}  \left(1-  \frac{ m _H(\{h \in V: g_{t}hx \in K'\})}{m_H(V)}\right)
\end{split}
\end{equation}
(the last step follows from \eqref{eq: chi t} and Definition \ref{defn: congujation}).
   {Now} apply Proposition \ref{prop: KM equidistribution generic} with 
   {$Q$ replaced by $K'$, $L$ replaced by $K$, and $\eta$ replaced by $\frac{\eta m_H(V)}{3}$}.
    This gives us a time $t_1 >0$ such that, for all  $x \in K$ and $t \geq t_1$,
   \begin{equation}\label{eq: equidistribution generic cor}
          {m_H \big(\left\lbrace h \in V: g_thx \in  {K'} \right\rbrace\big)  > m_H(V)\big(m_X( {K'}) - \eta /{3}\big) 
          }.
    \end{equation}
   
On the other hand, 
Proposition \ref{prop: expansion} yields a time $t_2 > 0$ such that, for all $t \geq t_2$,
\begin{equation}\label{eq: minuend}
   \#\left\lbrace \gamma \in \Lambda : V\gamma \subset \Phi_t(V)\right\rbrace \geq 
   e^{\chi t}  \left(1 - {\eta}/{3} \right).
\end{equation}
Letting  $t_0 := \max\{t_1, t_2\}$ and combining \eqref{eq: bdd page 19}, \eqref{eq: equidistribution generic cor} and \eqref{eq: minuend}, for any $t > t_0$ we obtain
\begin{equation*}
\begin{split}
    \#\left\lbrace \gamma \in \Lambda : V\gamma \subset \Phi_{t}(V)\text{ and } V\gamma g_{t}x \subset K \right\rbrace &\geq e^{\chi t}   (1 - \eta/3) - e^{\chi t}  \big( 1  - m_X (K') + \eta/3 \big) \\
    &= e^{\chi t}   \big( m_X (K')  - 2\eta/3 \big) \geq e^{\chi t}  \left( m_X(K) - \eta\right),
\end{split}
\end{equation*}
which is the desired conclusion.
\end{proof}

We now come to the special step of the 
procedure which ensures that our constructed orbits have prescribed limit points.

\begin{prop}[The approach step]\label{prop: approach}  For any compact $K\subset X$  there exists $\delta > 0$ 
such that for any $r,\sigma < \delta$
   one has the tessellation 
   $(V_r,\Lambda_r)$ and a time $t_0 >0$ such that the following 
   holds. For any $t \geq t_0$ and $\varepsilon>0$ 
   one can find $s_\varepsilon>s'_\varepsilon>t$ such that for  any $x\in K$ and any $y\in K$ with  \eq{conditionsony}{Fy\subset K\text{ and }\dist_X(Fy, \partial K)>1}  there exists $\gamma = \gamma(x,y,\varepsilon)\in \Lambda_r$ 
   satisfying
   \begin{itemize}
   \item[\rm (i)] The set $W:= \Phi_{-s_\varepsilon}(V_r \gamma)$ is a subset of $V_r$.
       \item[\rm (ii)]  For all $s\in [t,s_{\varepsilon}]$ we have $g_{s}Wx\subset  K$.
        \item[\rm (iii)] 
        For all $h \in W$, $\dist_X\big(g_{s'_\varepsilon}hx,B_Z(\sigma)y\big) < \varepsilon$. 
   \end{itemize}
\end{prop}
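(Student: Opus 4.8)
The plan is to build $W$ by the inductive ``tree'' method of \cite{KM} for producing bounded trajectories, with a final block of steps that additionally drives the trajectory into a small neighbourhood of $y$. After the usual preliminaries --- shrinking $K$ to a compact $K'\subset K$ with $m_X(\partial K')=0$, fixing a grid step $\delta_0>0$ with $m_X(K')e^{\chi\delta_0}>1$ (so that iterating the generic step keeps exponentially many tiles), and fixing $\delta>0$ so small that for $r,\sigma<\delta$ the tessellation $(V_r,\Lambda_r)$ of Lemma \ref{lem: tessellation domain} has tiny diameter and $\dist_X$, $\dist_G$ are comparable on all the tiles and $\Phi_s$-images that occur --- I would take $t_0$ to dominate the resulting equidistribution times. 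Writing $\lambda_{\min}\le\lambda_{\max}$ for the extreme real parts of the eigenvalues of $\ad_{a_0}$ on $\fh$, note that $\Phi_s|_H$ dilates the weight-$\lambda$ eigencomponent by $e^{s\operatorname{Re}\lambda}$, so $\Phi_s\big(\Phi_{-\tau}(V_r)\big)$ has diameter $O(r)$ for all $0\le s\le\tau$ and all $\tau>0$. Given $t\ge t_0$ and (shrinking if needed) $\varepsilon<1$: pick $\tau$ with $\operatorname{diam}_H\big(\Phi_{-\tau}(V_r)\big)<\varepsilon/2$; pick a small constant $\varepsilon_1>0$, depending only on $K,r$ and the spectrum (not on $\varepsilon$), small enough that a size-$\le\varepsilon_1$ perturbation in the $\fn\oplus\fh^-$ directions of a point of $Fy$, together with an $O(r)$ perturbation in $\fh$, stays in $K$ after any of the finitely many $\Phi_s$-iterations below; then take $s'_\varepsilon>\max(t,t_0)$ large enough for the equidistribution inputs and set $s_\varepsilon:=s'_\varepsilon+\tau$, so that $s_\varepsilon>s'_\varepsilon>t$.

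The inductive construction of a nested family of tessellation sub-tiles of $V_r$ then has three phases. In the \emph{generic phase}, over $[t,s'_\varepsilon-L]$ for a fixed finite multiple $L$ of $\delta_0$, I iterate Proposition \ref{prop: generic} on the grid $\delta_0$, at each step retaining the sub-tiles whose flow stays in $K'$ and using uniform continuity of the flow over $[0,\delta_0]$ to promote grid-time control to interval control; this keeps an exponentially large surviving collection, with base points (at the grid times) in the compact $K'$. In the \emph{approach phase}, over $[s'_\varepsilon-L,s'_\varepsilon]$, I use Proposition \ref{prop: equidistribution for small balls} --- whose conclusion is uniform over base points in the compact $K'$ --- together with $Fy\subset K$ and $\dist_X(Fy,\partial K)>1$: at the first step one passes to surviving sub-tiles whose flow enters a fixed $\varepsilon_1$-tube around $Fy$, and thereafter one follows this tube (so that boundedness is automatic, the tube lying in $K$), steering the last step so that the surviving sub-tiles meet $B_X(y,\varepsilon_1)$ at time $s'_\varepsilon$. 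Finally I descend to the $\Phi_{-s_\varepsilon}$-resolution: among the tiles $V_r\gamma\subset\Phi_{s_\varepsilon}(V_r)$ with $\Phi_{-s_\varepsilon}(V_r\gamma)$ inside a surviving sub-tile, the ``corners'' $\Phi_{-\tau}(\gamma)\,g_{s'_\varepsilon}x$ run over a fine net on an $H$-piece meeting $B_X(y,\varepsilon_1)$, whose closest point to $y$ has offset from $y$ lying (up to higher order) in $\fn\oplus\fh^-$; choosing $\gamma$ whose corner is within $O(\varepsilon_1)$ of $y$, with the offset split as a tiny $\fh$-net-displacement times a size-$O(\varepsilon_1)$ perturbation in $\fn\oplus\fh^-$, I set $W:=\Phi_{-s_\varepsilon}(V_r\gamma)$.

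Verification is then bookkeeping with the commutation relation \eqref{conj}. Item (i) is immediate since $\Phi_{-s_\varepsilon}(V_r\gamma)\subset\Phi_{-s_\varepsilon}\big(\Phi_{s_\varepsilon}(V_r)\big)=V_r$. For (iii): $g_{s'_\varepsilon}Wx=\Phi_{-\tau}(V_r\gamma)\,g_{s'_\varepsilon}x$ has diameter $\le\operatorname{diam}_H(\Phi_{-\tau}(V_r))<\varepsilon/2$ and is based within $O(\varepsilon_1)<\varepsilon/2$ of $y\in B_Z(\sigma)y$, so every $h\in W$ satisfies $\dist_X(g_{s'_\varepsilon}hx,B_Z(\sigma)y)<\varepsilon$. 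For (ii): on $[t,s'_\varepsilon]$ this is exactly what the generic and approach phases produce; on $[s'_\varepsilon,s_\varepsilon]$ write $g_{s'_\varepsilon+s}Wx=\Phi_{s-\tau}(V_r)\,g_sp$ with $p$ the corner, so the $H$-factor has diameter $O(r)$, while $g_sp$ stays within $O(r)+O(\varepsilon_1)<1$ of $Fy$ --- the $\fh$-net-displacement re-expands only to $O(r)$ (being small precisely in the fast eigendirections) and the $\fn\oplus\fh^-$ part stays bounded by Lemma \ref{lem: contracting expanding explicit computation} --- whence $g_{s'_\varepsilon+s}Wx\subset K$.

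The step I expect to be the real obstacle is the interface between the two phases. Each of ``the sub-tile's $g_s$-orbit stays in $K$ throughout $[t,s'_\varepsilon]$'' and ``its $g_{s'_\varepsilon}$-image is $\varepsilon_1$-close to $y$'' selects only a small proportion of sub-tiles, so a direct intersection or pigeonhole fails; one must impose both inside a single tree, letting the last $L$ steps follow the bounded orbit $Fy$ rather than $K$ itself (so boundedness there costs nothing) and keeping the approach error inside the non-expanding directions $\fn\oplus\fh^-$, so that --- $L$ being a fixed number of steps --- reading that error backward across the approach phase, or re-expanding it forward across $[s'_\varepsilon,s_\varepsilon]$, costs only a bounded factor, absorbed by the smallness of $\varepsilon_1$. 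Arranging $L,\varepsilon_1,\tau,s'_\varepsilon$ (and the ambient $K',\delta_0,r$) in the right order so that this hierarchy of smallness closes is the delicate point.
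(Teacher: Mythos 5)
There is a genuine gap. The proposition asks for a \emph{single} tile $W=\Phi_{-s_\varepsilon}(V_r\gamma)$ per pair $(x,y)$, and the paper produces it with no tree at all: one application of Proposition \ref{prop: equidistirbution for small balls} at time $t$, aimed at the pulled-back point $g_{-s_1}y$ (with $s_1$ chosen so that $\Phi_{s_1}\big(B_{H^-}(\sigma)\big)\subset B_{H^-}(\varepsilon/2)$), gives $h_1\in V_{r/2}$ with $g_th_1x\in B_X(g_{-s_1}y,c\sigma)$; the local product structure writes $g_th_1x=h_2h^-zg_{-s_1}y$, and the correction $h_3:=\Phi_{-t}(h_2^{-1})h_1\in V_{3r/4}$ removes the $H$-component of the error \emph{exactly}, so that $g_th_3x=h^-zg_{-s_1}y$; one then takes $\gamma$ with $\Phi_{s_\varepsilon}(h_3)\in\overline V\gamma$. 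Assertion (ii) on all of $[t,s_\varepsilon]$ is then automatic, because the orbit of $Wx$ tracks $Fy$ up to three factors each kept inside $B_G(1/3)$ and $\dist_X(Fy,\partial K)>1$; and (iii) holds because at time $s'_\varepsilon=t+s_1$ the $H^-$-error has been contracted by $\Phi_{s_1}$, the $H$-extent of the tile is still $\Phi_{s'_\varepsilon-s_\varepsilon}$-small, and the $Z$-error, which never becomes small, is accommodated by the target set $B_Z(\sigma)y$. Your multi-phase construction (generic phase, tube-following, final descent) is machinery that belongs to the proof of Theorem \ref{thm: theorem on H}, where the present proposition is invoked as a single-child step; importing it here is precisely what creates the ``interface'' difficulty you describe, and you explicitly leave that step --- keeping the error in the non-expanding directions while closing the hierarchy of smallness among $L,\varepsilon_1,\tau,s'_\varepsilon$ --- unresolved. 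That step is where the actual content of the proof lies (in the paper it is the $h_2\mapsto h_3$ correction together with the stipulations \eqref{eq: t0 large}--\eqref{eq: sepsilon large 2}), so the proposal is a plan rather than a proof.

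There is also a concrete inconsistency: you declare $\varepsilon_1$ to depend only on $K$, $r$ and the spectrum, yet your verification of (iii) needs the approach error $O(\varepsilon_1)<\varepsilon/2$, which fails once $\varepsilon<\varepsilon_1$. The quantifier order does allow $\varepsilon_1$ to depend on $\varepsilon$ (it is chosen after $\varepsilon$), and indeed you need $\varepsilon_1\lesssim\varepsilon$, since hitting $B_X(y,\varepsilon_1)$ directly leaves an $H^-$-component of size up to $\varepsilon_1$ that nothing contracts before time $s'_\varepsilon$; but then the tube radius, the number $L$ of approach steps and the corresponding equidistribution times all become $\varepsilon$-dependent and the postponed bookkeeping must be redone. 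The paper avoids this entirely: the hit has the \emph{fixed} accuracy $c\sigma$, the smallness in $H^-$ is produced dynamically by flowing forward for time $s_1$, the smallness in $H$ by the exact correction inside $H$ together with the gap $s_\varepsilon-s'_\varepsilon$, and no smallness is required in $Z$. Two smaller points: in your generic phase, containment in $K$ at non-grid times requires a quantified margin between $K'$ and $\partial K$ absorbing both the flow displacement over a grid step and the in-step expansion of the tiles; and your chosen tile $V_r\gamma$ may straddle the boundary of the surviving sub-tile or of $\Phi_{s_\varepsilon}(V_r)$ unless you build in a margin, as the paper does with the chain $V_{r/2}\subset V_{3r/4}\subset V_r$ and the containment \eqref{eq: sepsilon large 1}.
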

\begin{proof}[Proof of Proposition \ref{prop: approach}]
    Take $\delta_1>0$ such that the multiplication map from $H\times H^- \times Z$ to $G$ maps onto $B_G(\delta_1)$ and has an inverse on $B_G(\delta_1)$. That is, we have a smooth and bi-Lipschitz (with respect to the supremum metric of the product space) map
    \begin{equation*}
        B_G(\delta_1) \to H \times H^{-} \times Z
    \end{equation*}
    which composes with multiplication to give the identity map on $B_G(\delta_1)$.
    In particular, there exists a constant $c \in (0,1)$ such that, for all $r \leq \delta_1$, 
    \begin{equation*}
        g \in B_G(cr) \implies g = hh^-z \text{ for some } (h,h^-,z) \in B_H(r)  \times B_{H^-}(r)\times B_Z(r).
    \end{equation*}

    We may shrink $\delta_1$ further to ensure that
    \begin{itemize}
        \item $\delta_1 < 1/3$;
        \item $\operatorname{diam}_X(V_{\delta_1}) < 1/3$;
        \item $\Phi_t\big(B_{H-}(\delta_1)\big) \subset B_G(1/3) \text{ for all } t \geq 0$;
        \item $\Phi_t\big(V_{\delta_1}(\overline{V_{\delta_1}})^{-1}\big) \subset B_G(1/3)$ for all $t \leq 0$;
        \item $\Phi_t\big(B_{Z}(\delta_1)\big) \subset B_G(1/3)$ for all $t \in \R$.
    \end{itemize}
    Now fix $\delta < \min\{\rho(K), \delta_1\}$.
    Thus, for every point $x \in K$ and $r < \delta$ the map
    \begin{equation*}\label{eq: product decomposition}
        B_H(r)  \times B_{H^-}(r) \times B_Z(r) \to X; \ (h,h^-,z) \mapsto hh^-z x
    \end{equation*}
    maps onto a neighborhood of $B_X(x,cr)$.
        
    Fix $r, \sigma < \delta$ and consider the tessellation domain $V = V_r$ from Lemma \ref{lem: tessellation domain} with a corresponding discrete set $\Lambda = \Lambda_r$.
    Apply Proposition \ref{prop: equidistirbution for small balls} with parameters $V$, $K$, $\sigma$, $L$ and $\varepsilon$ there replaced by $V_{r/2}$, $K$, $c\sigma$, $K$ and  
    \begin{equation*}
        \frac{m_H(V_{r/2})m_G(B(c\sigma))}{2}
    \end{equation*} 
    respectively.
    From this we obtain a $t_0>0$ such that, for all $t > t_0$, $x \in K$, $y \in K$, 
    \begin{equation}\label{eq: equidistirbution in small balls}
        m_H\left(\left\lbrace h \in V_{r/2}: g_t hx \in B_X(y,c\sigma) \right\rbrace\right) > \frac{m_H(V_{r/2})m_G(B(c\sigma))}{2}.
    \end{equation}
    We further assume $t_0$ is large enough so that
    \begin{equation}\label{eq: t0 large}
        \Phi_{-t}\left((B_H(\sigma))^{-1}\right) V_{r/2} \subset V_{3r/4} \text{ for all } t \geq t_0.  
    \end{equation}

    We now fix $t \geq t_0$ and $\varepsilon > 0$, 
    choose $s_1 > 0$ large enough so that
    \begin{equation}\label{eq: s1 large}
        \Phi_{s_1}\big(B_{H^-}(\sigma)\big) \subset B_{H^-}(\varepsilon/2),
    \end{equation}
    and then choose $s_\varepsilon > s'_\varepsilon:=s_1 +t$ so that
    \begin{equation}\label{eq: sepsilon large 1}
        \Phi_{-s_\varepsilon}\big(V(\overline{V})^{-1}\big)V_{3r/4} \subset V 
    \end{equation}
    and 
    \begin{equation}\label{eq: sepsilon large 2}
    \Phi_{s'_\varepsilon - {s_\varepsilon}}\big(V(\overline{V})^{-1}\big) \subset B_{H}(\varepsilon/2).
    \end{equation}

   Now fix $x\in K$ and  $y\in K$ such that \equ{conditionsony} holds. Inequality \eqref{eq: equidistirbution in small balls} applied with $y$ replaced by $g_{-s_1}y$, which is also contained in $K$, gives us an $h_1 \in V_{r/2}$ with 
    \begin{equation*}
        g_{t}h_1 x \in B_X(g_{-s_1}y, c\sigma).
    \end{equation*}
    The assumption on $\delta$ allows us to write $g_th_1 x$ as a product.
    \begin{equation}\label{eq: written as product}
        g_{t} h_1 x = h_2h^-zg_{-s_1}y \text{ for } h_2 \in B_H(\sigma), \ h^- \in B_{H^-}(\sigma), \ z \in B_Z(\sigma).
    \end{equation}
    In particular, we have
    \begin{equation}\label{eq: gt h perturbed}
        g_{t} \Phi_{-t}(h_2^{-1})h_1x = h^- zg_{-s_1}y.
    \end{equation}
    The stipulation in \eqref{eq: t0 large} ensures that we can write
    \begin{equation}\label{eq: h perturbed still in V}
        \Phi_{-t}(h_2^{-1})h_1x = h_3 x \text{ where } h_3 \in V_{3r/4}.
    \end{equation}
    Combining \eqref{eq: gt h perturbed} and \eqref{eq: h perturbed still in V}, we have
    \begin{equation}\label{eq: h3 hits the spot}
        g_{t} h_3x = h^-zg_{-s_1}y.
    \end{equation}
    Choose and fix $\gamma \in\Lambda$ with $\Phi_{s_\varepsilon}(h_3) \in \overline{V}\gamma$. 
    We write 
    \begin{equation}\label{eq: h3 as v3}
        h_3 = \Phi_{-s_\varepsilon}(h_4 \gamma) \text{ for some } h_4 \in \overline{V},
    \end{equation}  which gives
    \begin{equation*}
        W 
        := \Phi_{-s_\varepsilon}(V\gamma)   
        = \Phi_{-s_\varepsilon}(Vh_4^{-1})h_3.
    \end{equation*}
    Since $h_3 \in V_{3/4}$ and since we have the containment \eqref{eq: sepsilon large 1}, we see that 
        $W \subset V$.
    This proves Assertion (i) of the Proposition.
    We now compute, for $t \leq s \leq s_\varepsilon$,
\begin{equation}\label{eq: part ii computation}
        \begin{split}
            g_s Wx &=g_s \Phi_{-s_{\varepsilon}}(V\gamma)x \\
        &\underset{\eqref{eq: h3 as v3}}\subset g_s\Phi_{-s_\varepsilon}\big(V(\overline{V})^{-1}\big)h_3x  =   \Phi_{s-s_\varepsilon}\big(V(\overline{V})^{-1}\big)g_{s}h_3x \\
            &\underset{\eqref{eq: h3 hits the spot}}= \Phi_{s-s_\varepsilon}\big(V(\overline{V})^{-1}\big)g_{s-t} {h^-}zg_{-s_1}y \\
            &= \Phi_{s-s_\varepsilon}\big(V(\overline{V})^{-1}\big)\Phi_{s-t}({h^-}) \Phi_{s-t} (z)g_{s-t-s_1}y \\
            &\underset{\eqref{eq: written as product}}\subset \Phi_{s-s_\varepsilon}\big(V(\overline{V})^{-1}\big) \Phi_{s-t}\big(B_{H^-}(\sigma)\big) \Phi_{s-t}\left(B_Z(\sigma)\right)Fy\\
            &\subset B_G(1/3 + 1/3 + 1/3)Fy \subset K,
        \end{split}
    \end{equation}
    where we have used the stipulations on $\delta$ and \equ{conditionsony}  for the last two containments.
    This proves Assertion (ii).

    Finally we compute, as in \eqref{eq: part ii computation},
    \begin{equation*}
        \begin{split}
    g_{{s'_\varepsilon}}Wx 
            \underset{\eqref{eq: h3 as v3}}\subset & \Phi_{s'_\varepsilon-s_\varepsilon}\big(V(\overline{V})^{-1}\big)g_{s_1+t}h_3x \underset{\eqref{eq: h3 hits the spot}}= \Phi_{{s'_\varepsilon} - s_\varepsilon}\big(V(\overline{V})^{-1}\big)\Phi_{s_1}(h^-)zy \\
            \subset \ \  & B_H(\varepsilon/2)B_{H^-}(\varepsilon/2)B_Z(\sigma)y,
        \end{split}
    \end{equation*}
    where \eqref{eq: sepsilon large 2} and \eqref{eq: s1 large} are used to justify the last containment. This proves Assertion (iii) and finishes the proof of the proposition.
\end{proof}

\ignore{\begin{proof}[Sketch of proof]
\ 

 \begin{itemize}
       \item
Take $\delta$ to be the minimum of the injectivity radius of $K$ (or maybe $1/4$ of the injectivity radius of  the $1$-neighborhood of $K$ to be safe)  and $\varepsilon_0$ which you defined in Remark \ref{rem: e0 assumption}. 
 \item Then given small $r$ and $\sigma$ define $t_0'$ using Proposition \ref{prop: equidistirbution for small balls} with $V = V_{r/2}$, $L = K$, and $\sigma$ in place of $\varepsilon$. 
  \item Now we take $t' > t_0'$, $x,y$ and $\varepsilon$. 
Take  $s_1$ such that \eq{defs1}{\Phi_{s_1}\big(B_{H^-}(\sigma)\big)\subset B_{H^-}(\varepsilon/2),} and let $z = g_{-s_1}y$.
Proposition \ref{prop: equidistirbution for small balls} guarantees a point $h'\in V_{r/2}$ such that $$g_{t'}h'x\in B_X(z,\sigma/2).$$
\comm{A word of caution: I am using $2$ and $1/2$ in place of constants $\new{C},C_2$ you used in  Remark \ref{rem: e0 assumption}. This has to be checked carefully. Also we have the estimate    $\new{C}r \le \textup{diam}(V_r) \le c_2r$.}
\item Now use the local product structure to find $h_1\in B_H(\sigma/2)$ such that $$h_1g_{t'}h'x\in B_{\tilde H}(\sigma/2)z.$$
Writing $h_1g_{t'}h' = g_{t'}\Phi_{-t'}(h_1) h'$, we see that it makes sense to work with $$h_0 := \Phi_{-t'}(h_1) h'$$ which should be in $V$; we conclude that $g_{t'}h_0x\in B_{\tilde H}(\sigma/2)z$, and therefore for any $s > 0$ one has 
\eq{bdd}{g_{t'+s}h_0x\in \Phi_s\big(B_{\tilde H}(\sigma/2)\big)g_sz,} which lies in a small neighborhood of $K$ because $Fz\subset K$.
\item Then use the  local product structure again to decompose $B_{\tilde H}(\sigma/2)$ and get 
$$g_{s_1}g_{t'}h_0x\in \Phi_{s_1}\big(B_{\tilde H}(\sigma/2)\big)g_{s_1}z \subset B_{H^-}(\varepsilon/2)B_Z(\sigma) y ,$$
where the last inclusion is due to \equ{defs1}.
\item Finally choose $s_2$ such that 
\eq{defs2}{\Phi_{-s_2}\big(B_{H}(2r)\big)\subset B_{H}(\varepsilon/2),}
then pick $\gamma\in\Lambda$ such that
$g_{s_2}g_{s_1}g_{t'}h_0\in\overline{V}\gamma$, and define $W$ by \equ{defw}. 
Then for $h\in W$ and all $t\le t' + s_1 + s_2$ the point $g_thx$ will be pretty close to $g_{t}h_0x$.
\item It remains to check that (i) follows from \equ{defs2}  and (ii) from \equ{bdd}.
\end{itemize}\end{proof}}

\section{
Proof of Theorem \ref{thm: theorem on H}}

In this section our goal is to prove Theorem \ref{thm: theorem on H}, that is, for given $x_0\in X$, construct many points of the form $hx_0$ such that their $F_+$-orbits are bounded, and, in addition, a given countable subset of $X$ is  in their orbit closure. Our proof is a modification of a well-known scheme of constructing points with bounded orbits due to the second-named author and Margulis \cite{KM}, see also \cite{Kl2, Kl}. Before dealing with the general case it will be instructive   to briefly describe this construction in a special case.

\begin{proof}[Proof of Theorem \ref{thm: theorem on H} in the special case $B = \varnothing$]
Take  $x_0\in X$ and a nonempty  open subset
$U$ of $H$. We need to prove that the \hd\ of the set \eq{bdd}{
\{h \in U:F_+ hx_0 \text{ is bounded}\}} is equal to $\dim H$. 
Replacing $x_0$ by $hx_0$ for some $h\in U$ we can assume that
 $U$ is a neighborhood of identity in $H$.
 
 Pick a compact set
$K\subset X$ with $m_X(\partial K) = 0$ containing $x_0$, and let \eq{defeta}{\eta := m_X(K) - m_X(K)^2.}
Then take $\delta>0$ furnished by Proposition \ref{prop: generic} and choose $0 < r < \delta$ such that $V_r\subset U$. By shrinking $r$ further we can assume that \eq{diamV}{ \on{diam}_H(V_r) < 1/2.}
As before we denote $V = V_r$ and let $ \Lambda =  \Lambda_r$ be a corresponding discrete subset of $H$. The proposition gives us  the value $t_0 >0$  such that \eqref{eq: generic conclusion} holds for  all $x \in K$ {and $t\ge t_0$}. Fix  $t > t_0$ and let $\Lambda(x)$ denote the set appearing in the left hand side of \eqref{eq: generic conclusion}, i.e.
\eq{lambdax}{\Lambda(x):=\left\lbrace \gamma \in \Lambda : V\gamma \subset \Phi_{t}(V)\text{ and } V\gamma g_{t} x \subset K \right\rbrace.}

To estimate the \hd\ of the set  \equ{bdd} from below, for each $x\in K$ we are going to construct a strongly
tree-like   collection $\ca{E}$  of subsets of $H$ such that its limit set $$\ca{E}_\infty:= \bigcap_{k \in \N} \cup \ca{E}_k$$ is contained in the set  \equ{bdd}.   We first let $\ca{E}_0 := \{\overline{V}\}$. 
Then define 
$$
\ca{E}_{1} :=
\left\{\Phi_{-t}(\overline{V}\gamma): \gamma \in
\Lambda(x_0)\right\}.
$$
That is, elements of $
\ca{E}_{1} $ are of the form $E = \Phi_{-t}(\overline{V})h$ for some $h\in H$, chosen so that $E\subset \overline{V}$ and $g_tEx_0 \subset K$.
Similarly, elements of $
\ca{E}_{k} $ will be of the form $$E = \Phi_{-kt}(\overline{V})h\subset \overline{V}\text{  for some }h\in H,$$ and it will follow from our inductive construction that \eq{inK}{E\in  \ca{E}_k\Longrightarrow g_{it}Ex_0 \subset K \text{ for }i = 1,\dots,k.}
Namely, 
if $\ca{E}_{k}$ is chosen 
and we take $E\in\ca{E}_{k}$ such that $$E = \Phi_{-kt}(\overline{V})h \subset \overline{V}\text{ and }g_{it}Ex_0 \subset K \text{ for }i = 1,\dots,k,$$  define the subcollection
 \eq{induction}{\ca{E}_{k+1}(E):= \left\{ \Phi_{-(k+1)t}(\overline{V}\gamma) 
 :\gamma \in
\Lambda({g_{kt}hx_0})\right \},}
 and let $ \ca{E}_{k+1}:= \cup_{E\in \ca{E}_{k}} \ca{E}_{k+1}(E)$. Then $
 \ca{E}_{k+1}(E)$ consists precisely of elements of $ \ca{E}_{k+1}$ contained in $E$ (i.e.\ "children" of $E$), and it follows from the definition of $\Lambda({g_{kt}hx_0})$  that $g_{(k+1)t}Ex_0 \subset K$ for any $E\in  \ca{E}_{k+1}$.
\smallskip

The first five parts of Definition \ref{defn: tree-like collection}  follow readily from the construction and
from $(V,\Lambda)$ being a tessellation of $H$. 
Also, if we  define
\begin{equation*}\label{lambdamindef}
    \lambda_{\text{min}} := \min\{\Re \lambda: \lambda \text{ is an eigenvalue for } \ad_{a_0}\otimes 1 \text{ with } \Re \lambda >0\},
\end{equation*}
then a computation using the exponential map shows that for all $t$ sufficiently large, 
\begin{equation}\label{eq: lambda min}
    \dist_H\big(\Phi_{-t}(h),e_G\big) \leq e^{-\frac{\lambda_{\text{min}}t}{2}}\dist_H(h,e_G) \text{ for all } h \in V.
\end{equation}
Note that the factor $1/2$ in the exponent is used to get rid of the constants that occur when comparing distances on $H$ with norms in $\fg$. By increasing $t$ we can assume that \eqref{eq: lambda min} is satisfied.
From   \equ{induction} and the   right-invariance of the metric  we can see that  for any $x\in K$ the diameter of any $E\in \ca{E}_{k}$ is equal to 
$\on{diam}_H\big(\Phi_{-kt}(V)\big)$. Thus it follows from \equ{diamV} and \eqref{eq: lambda min}   that
\begin{equation}\label{eq: dm estimate generic}
    d_k =  \sup\{\on{diam}_H(E) : E \in \ca{E}_k\} \leq e^{-\frac{\lambda_{\on{min}}kt}{2}}.
\end{equation} 
Hence \eqref{eq: diam} holds, and $E$ is a strongly tree-like collection.   Clearly \equ{inK} implies that  $g_{kt} E_\infty x_0\subset K$ for all $k\in\N$, hence 
$F_+E_\infty x_0$ is contained in  $g_{[-t,0]} K$.

It remains to estimate the \hd\ of $E_\infty$ from below. 
Using  \equ{induction} 
one can easily see that for any $k= 0,1,\dots$ 
and 
for $E \in \ca{E}_k$ of the form $E = \Phi_{-kt}(\overline{V})h$, 
\begin{equation*}
    \on{density}(\ca{E}_{k+1},E) = \frac{m_H\big(\cup\ca{E}_{k+1}(E)\big)}{m_H(E)} \ge e^{-\chi t} \#\Lambda({g_{kt}hx_0})
    .\end{equation*}
Thus Proposition \ref{prop: generic} and \equ{defeta} imply that for any $k= 0,1,\dots$  one has
\begin{equation}\label{eq: densitybound}
        \Delta_k = \inf\left\lbrace \operatorname{density}(\mathcal{E}_{{k+1}}, E) : E \in \mathcal{E}_k \right\rbrace \ge m_X(K) - \eta = m_X(K)^2,
\end{equation}
and an application of Theorem \ref{thm: urbanski} 
yields
\begin{equation*}
\begin{aligned}
        \dim H - \dim  {E}_\infty \leq &\limsup_{k \to \infty} \frac{\sum_{j=0}^{k} \log \Delta_j}{\log d_k}\\
        \underset{\text{\rm by } \eqref{eq: dm estimate generic} \text{\rm \ and }\eqref{eq: densitybound}} \leq &\limsup_{k \to \infty} \frac{(k+1)\log\big(\frac1{m_X(K)^2}\big)}{\frac12{\lambda_{\on{min}}kt}} = \frac{4\log\big(\frac1{m_X(K) }\big)}{{\lambda_{\on{min}}t}}.
       \end{aligned}
    \end{equation*}
The right hand side of the above inequality can be made arbitrary small by increasing $t$, which implies that  the \hd\ of the set  \equ{bdd} is equal to $ \dim H$. 
 \end{proof}
Now let us show how the above scheme can be modified to produce a proof of the general case of Theorem \ref{thm: theorem on H}. The idea is 
to perform lots of generic steps as before, but occasionally insert approach steps.
\begin{proof}[Proof of Theorem \ref{thm: theorem on H} when $B\neq \varnothing$] 
Fix a surjection 
    $\beta: \N \to B$
such that  $\beta^{-1}(y)$ is infinite for every $y 
\in B$,
and a sequence of positive real numbers $(\varepsilon_n)_{n \in \N}$ converging to zero.

Take a base-point $x_0 \in X$. As before, it suffices to choose a neighborhood $U$ of the identity in $H$ and show that the set of $h \in U$ such that $hx_0$ lies in
\begin{equation}\label{eq: da one and only}
    E(F_+, \infty) \cap \bigcap_{y \in B} A(F_+, B_Z(\rho)y)
\end{equation}
has full dimension in $H$.
Choose a compact set $K$ with
\begin{equation*}
    x_0 \in \text{int}(K),\ m_X(\partial K)=0,\ FB \subset K\text{ and } \dist_X(FB, \partial K)>1.
\end{equation*}
Apply Proposition \ref{prop: generic} with this $K$ and $\eta = m_X(K) - m_X(K)^2$.
This gives us a $\delta_1 >0$. Now we may further apply Proposition \ref{prop: approach} to $K$ to  obtain another $\delta_2 >0$.
We fix
\begin{equation*}
   \sigma < \rho/2 \ \text{ and } \ r < \min\{\delta_1, \delta_2, \dist_X(x_0, \partial K)\} \text{  such that \equ{diamV} holds},
\end{equation*}
and obtain a tessellation domain $(V_r, \Lambda_r)$, which we write as $(V,\Lambda)$, and  $t_0>0$ such that the conclusions of Propositions \ref{prop: generic} and \ref{prop: approach} hold.
Fix $ t>t_0$, and, as in the proof of the special case, by increasing $t$ if needed  assume that \eqref{eq: lambda min} is satisfied. 
This way:
\begin{itemize}
    \item for any $x\in  K$ we can define $\Lambda(x)\subset \Lambda$  as in \equ{lambdax} with $\# \Lambda(x) \geq e^{\chi t}  m_X(K)^2$;
    \item for any $\varepsilon>0$  one can choose    $s_\varepsilon>s'_\varepsilon>t$ such that   any $x\in K$ and  $y\in B$   come with $\gamma = \gamma(x,y,\varepsilon)\in \Lambda$ satisfying conditions (i)--(iii) of Propositions \ref{prop: approach}. \end{itemize} 
We now formalize the process of alternating generic and approach steps.
Namely  fix   an injection $f : \N \to \N$  to be determined later; our strategy will consist of performing approach steps whenever $k$ is of the form $f(n)$, and generic steps otherwise. Specifically, for each $x \in K$  and $k \in \N$ we define a subset $\Lambda(x,k)$ of $\Lambda$ by
\begin{equation}\label{deflambdaxk}
    \Lambda(x,k):= \begin{cases} 
    \Lambda(x) \text{ as in \equ{lambdax} (from applying Proposition \ref{prop: generic})} & \text{ if }\ k \notin f(\N);\\
    \left\lbrace \gamma\big( x,\beta(n),\varepsilon_n\big) \right\rbrace \text{ (from applying Proposition \ref{prop: approach})}  & \text{ if }\ k = f(n).
     \end{cases}
\end{equation}
We also define a sequence of times $(t_k)_{k \in \N}$ by
\begin{equation*}
    t_k := \begin{cases}
        t & \text{if } k \notin f(\N), \\
        s_{\varepsilon_n} & \text{if } k = f(n),
    \end{cases}
\end{equation*}
and set $T_0 = 0$ and $ T_k := \sum_{j=1}^k t_j \text{ for each } k \in \N$, or,  equivalently,
\begin{equation}\label{tk}
    T_k = \big(k-\#f^{-1}(\{1,\dots,k\})\big) t + \sum_{f(n)\le k} s_{\varepsilon_n} = k  t + \sum_{f(n)\le k} (s_{\varepsilon_n}-t).
\end{equation}

We now come to the inductive construction. Define
\begin{equation*}
    \ca{E}_0 := \{\overline{V}\},\ p_0: \ca{E}_0 \to H;\ \overline{V} \mapsto e_H.
\end{equation*} 
Assume we have defined $\ca{E}_j$ and $p_j: \ca{E}_j \to H$ for $j=0,1,\dots, k$ with the properties that
\begin{enumerate}
    \item[(a)] Each $E \in \ca{E}_j$ is of the form
    \begin{equation*}
        \Phi_{-T_j}(\overline{V})p_j(E)\ \text{ with }\ g_{T_j}p_j(E) x_0 \in K.
    \end{equation*}
    \item[(b)] If $E \in \ca{E}_j$, then 
    \begin{equation*}
        g_{[0,T_j]}Ex_0 \subset g_{[-t,t]}K.
    \end{equation*}
    \item[(c)] 
    For any $j \in \{1,\dots, k\}$ with $j = f(n)$ for some $n \in \N$   there exists   $T \in [T_{j-1}, T_j]$ such that, for all $E \in \ca{E}_j$,
    \begin{equation*}
        g_{T}Ex_0 \subset B_Z(\sigma) B_X\big(\beta(n), \varepsilon_n\big).
    \end{equation*}
\end{enumerate}
We then proceed to define, for each $E \in \ca{E}_k$, the collection of children $\ca{E}_{k+1}(E)$ as 
\begin{equation}
\begin{aligned}
\label{eq: children inductive}
 \ca{E}_{k+1}(E) 
 := &\text{ the set of all } E' =  \Phi_{-T_{k+1}}(\overline{V}\gamma)p_k(E), \\ 
 &\text{ where } 
 \gamma \in \Lambda(g_{T_k}p_k(E)x_0, k+1).
    \end{aligned}
\end{equation}
We further define, for each $E'$ as above,
\begin{equation*}
    p_{k+1}(E') = \Phi_{-T_{k+1}}(\gamma)p_k(E).
\end{equation*}
Setting
    $\ca{E}_{k+1}:= \bigcup_{E \in \ca{E}_k}\ca{E}_{k}(E)$,
we now proceed to verify the conditions (a)--(c) of the induction. For (a), note that each $E' \in \ca{E}_{k+1}(E)$ is of the form $\Phi_{-T_{k+1}}(\overline{V}) p_{k+1}(E')$, and we must show that $g_{T_{k+1}}p_{k+1}(E')x_0 \in K$. We compute
\begin{equation*}
        g_{T_{k+1}}p_{k+1}(E')x_0 
        = g_{T_{k+1}}\Phi_{-T_{k+1}}(\gamma)p_k(E)x_0 
        = \left(\gamma g_{t_{k+1}}\right)g_{T_k}p_k(E)x_0,
\end{equation*}
which is contained in $K$ either from the definition of the set in \eqref{eq: generic conclusion}, or from  Proposition \ref{prop: approach}(ii). (Note that $V$ contains the identity.)

We come to (b). First note that each $E' \in \ca{E}_{k+1}(E)$ as in \eqref{eq: children inductive} is a subset of $E$:
\begin{equation*}
    E' = \Phi_{-T_{k+1}}(\overline{V}\gamma)p_k(E) = \Phi_{-T_k}\circ\Phi_{-t_{k+1}}(\overline{V}\gamma)p_k(E) \subset \Phi_{-T_k}(V)p_k(E) = E
\end{equation*}
where the containment holds from the either Proposition \ref{prop: generic} or  Proposition \ref{prop: approach}(i).
Thus, it suffices to show $g_{T_k +s}E'x_0 \subset g_{[-t,t]}K$ for all $0 \leq s \leq t_{k+1}$. For such an $s$, we compute
\begin{equation*}
        g_{T_k+s} E'x_0 
        = g_{T_k+s} \Phi_{-T_{k+1}}(\overline{V}\gamma)p_k(E)x_0 
        = g_{s} \Phi_{-t_{k+1}}(\overline{V}\gamma)g_{T_{k}}p_k(E)x_0.
\end{equation*}
Now if $k+1 \notin f(\N)$, we know that $t_{k+1} = t$, and the conclusion follows from Proposition \ref{prop: generic}. Otherwise, 
 in view of \eqref{deflambdaxk},  \eqref{eq: children inductive} and Proposition \ref{prop: approach}, for some $n \in \N$ we have
\begin{equation}\label{approachdetailed}
   \begin{split}
 \ca{E}_{k+1}(E) = \{{E'}\},\   t_{k+1} = s_{\varepsilon_n}&, \text{ and }\\
        g_{T_k+s} {E'}x_0 
        = g_{s} \Phi_{-s_{\varepsilon_n}}(\overline{V}\gamma)g_{T_{k}}p_k(E)x_0 &= g_{s} \overline{W}g_{T_{k}}p_k(E)x_0,\\ \text{ where }W= \Phi_{-s_{\varepsilon_n}}(V \gamma) \text{ as in Pro}&\text{position \ref{prop: approach}(i)}. 
   \end{split}
\end{equation}
Hence the conclusion follows from Proposition \ref{prop: approach}(ii).
Finally, to demonstrate (c) it suffices to only consider the  approach step case.
Then one can use \eqref{approachdetailed} and Proposition \ref{prop: approach}(iii) to conclude that $$g_{T_k+s'_{\varepsilon_n}} {E'}x_0 
        =  g_{s'_{\varepsilon_n}} \overline{W}g_{T_{k}}p_k(E)x_0$$
lies in the $\varepsilon_n$-neighborhood of $B_Z(\sigma)\beta(n)$.

\smallskip
This completes  the inductive construction. As before, it is easy to show that the family of sets $\ca{E}_k$ satisfies the
requirements of Definition \ref{defn: tree like collection}; in particular, in view of  \equ{diamV} and \eqref{eq: lambda min}, \begin{equation}\label{dkgeneral}
    d_k = \sup\{\on{diam}_H(E): E \in \ca{E}_k\} \leq e^{\frac{-\lambda_{\on{min}}T_k}{2}}.
\end{equation}
In light of (b), (c) and the infinitude of the fibers of $\beta$,
 the set in \eqref{eq: da one and only} contains $E_\infty x_0$.

It thus remains to estimate the Hausdorff dimension of 
$E_\infty$. Clearly $$\Delta_k = \inf\{\on{density}(\ca{E}_{k+1},E): E \in \ca{E}_k\}$$ depends on whether   $k+1$ corresponds to a generic or a special step. In the former case estimate \eqref{eq: densitybound} still holds. Otherwise  $k+1 =f(n)$ for some $n\in\N$,  $\ca{E}_{k+1}(E)$ is a singleton for every $E \in \ca{E}_k$, and $\Delta_k = e^{-\chi s_{\varepsilon_n}}$. Hence, analogously to \eqref{tk}, we can write \begin{equation}\label{deltak}
\begin{aligned}
    \sum_{j=0}^k\log \Delta_j  &\geq \big(k+1-\#f^{-1}(\{1,\dots,k+1\})\big) \log \big(m_X(K)^2\big) - \chi \sum_{f(n)\le k+1} s_{\varepsilon_n}\\ &= 2(k+1)  \log m_X(K) - \sum_{f(n)\le k+1} \big(\chi s_{\varepsilon_n}+2\log m_X(K)\big).
    \end{aligned}
\end{equation}
Now, using \eqref{tk}, \eqref{dkgeneral} and \eqref{deltak} we can apply Theorem \ref{thm: urbanski} 
to obtain
\begin{equation*}
\begin{aligned}
        \dim H - \dim  {E}_\infty \leq \ &\limsup_{k \to \infty} \frac{\sum_{j=0}^{k} \log \Delta_j}{\log d_k}\\
        \leq \ &\limsup_{k \to \infty} \frac{2(k+1)  \log m_X(K) - \sum_{f(n)\le k+1} \big(\chi s_{\varepsilon_n}+2\log m_X(K)\big)}{-\frac12{\lambda_{\on{min}}\big(k  t + \sum_{f(n)\le k}(s_{\varepsilon_n}-t) \big)}} \\ = \ &\limsup_{k \to \infty} \frac{2(k+1)  \log  \frac1{m_X(K) }  + \sum_{f(n)\le k+1} \big(\chi s_{\varepsilon_n}-2\log \frac1{m_X(K) }\big)}{\frac12{\lambda_{\on{min}}\big(k  t + \sum_{f(n)\le k}(s_{\varepsilon_n}-t) \big)}}\\ \le\  &
        \frac{4  \log  \frac1{m_X(K) }} {\lambda_{\on{min}}   t} + \frac{2\chi }{\lambda_{\on{min}}   t}
        \limsup_{k \to \infty} \frac{  \sum_{f(n)\le k+1}   s_{\varepsilon_n} }k .
       \end{aligned}
    \end{equation*}
    But since the sequence $\{f(n)\}$ can be fixed after the choice of the values $s_{\varepsilon_n}$ is made, it simply remains to choose $f$ sparse enough so that $\sum_{f(n)\le k+1}   s_{\varepsilon_n}$ grows sublinearly in $k$. Hence, as in the proof for $B = \varnothing$,  we get $$\dim  {E}_\infty\ge \dim H - \frac{4  \log  \frac1{m_X(K) }} {\lambda_{\on{min}}   t},$$ and letting $t\to\infty$ we conclude that the set \eqref{eq: da one and only} is thick.
    \end{proof}

\end{document}